\documentclass[12pt]{amsart}
\usepackage{a4}
\usepackage{amssymb}

\makeatletter
\@addtoreset{equation}{section}
\makeatother

\usepackage{color}
\usepackage[mathscr]{eucal}
\usepackage{amsmath,amsthm,amssymb}
\usepackage{mathrsfs}
\usepackage{enumerate}
\usepackage{bm}
\usepackage{graphicx}
\usepackage{verbatim}
\usepackage{wrapfig}
\usepackage{ascmac}
\usepackage{multicol}
\usepackage{mathtools}
\usepackage{multirow}
\usepackage{diagbox}
\usepackage{here}

\usepackage{hyperref}

\newtheorem{Prop}{Proposition}[section]
\newtheorem{Thm}[Prop]{Theorem}
\newtheorem{Lem}[Prop]{Lemma}

\theoremstyle{definition}
\newtheorem{Def}[Prop]{Definition}

\newtheorem{Rem}[Prop]{Remark}

\newcommand{\Z}{\mathbb{Z}}

\newcommand{\ad}{ad}

\newcommand{\Der}{\mathrm{Der}}
\newcommand{\Path}{\mathrm{Path}}
\newcommand{\fr}[1]{\mathfrak{#1}}

\title[Nilpotent Lie algebras from parabolic subalgebras and quivers]{On nilpotent Lie algebras arising from parabolic subalgebras and quivers --- three-step cases}

\author{Fumika Mizoguchi}
\address[F.~Mizoguchi]{RIKEN Center for Advanced Intelligence Project} 
\email{fumika.mizoguchi@riken.jp}

\date{}

\makeatletter
\@namedef{subjclassname@2020}{\textup{2020} Mathematics Subject Classification}
\makeatother
\subjclass[2020]{}

\keywords{nilpotent Lie algebras, parabolic subalgebras} 

\begin{document} 

\maketitle 

\begin{abstract} 
	Nilpotent Lie groups are important subjects for studying spacial left-invariant geometric structures. However, nilpotent Lie groups are impossible to classify, and even the two-step case has not been classified. There are several classes of examples of nilpotent Lie algebras that arise with higher nilpotency steps. In this paper, we focus on two of these methods. The first one is the nilpotent parts of parabolic subalgebras of semisimple Lie algebras. The second one is the nilpotent Lie algebras obtained by quivers. We list and compare nilpotent Lie algebras obtained by these two methods that are three-step and have dimension $\leq 10$. 
\end{abstract}

\section{Introduction}
	Nilpotent Lie algebras play important role in the study of special left-invariant geometric structures.  In fact, nilpotent Lie groups provide various examples that admit nontrivial Ricci soliton metrics and non-flat Ricci-flat pseudo-Riemanninan metrics. However, nilpotent Lie groups are impossible to classify, and even the two-step case has not been classified. As the step increases, the complexity grows, making general theory much harder to develop. Based on the above, it is difficult to construct higher-step easy-to-handle examples. 
	
	There are several classes of examples of nilpotent Lie algebras that arise with higher nilpotency steps. One of them is nilpotent Lie algebras obtained by quivers. A quiver is a directed graph where loops and multiple arrows between two vertices are allowed. Utilizing the concept of paths within quiver, in our work, we defined nilpotent Lie algebras obtained by finite quivers without cycles (\cite{MT}). For a finite quiver without cycles, there exists the maximum of length of paths which is called the length of a quiver. Then, for a quiver $Q$ of length $m$, the obtained Lie algebra $\fr{n}_Q$ is an $m$-step nilpotent Lie algebra. Therefore, from quivers we can construct nilpotent Lie algebras with arbitrarily high nilpotency steps. 
	
	Next, other examples are nilpotent parts of parabolic subalgebras. They are examples that arise in higher-step nilpotent Lie algebras. Let $\fr{q}_{\lambda'}$ be the parabolic subalgebra of a semisimple Lie algebra $\fr{g}$ corresponding to a subset $\Lambda'$ of a simple roots $\Lambda$ of the restricted root system of $\fr{g}$. Then $\fr{q}_{\Lambda'}$ has a decomposition $\fr{q}_{\Lambda'}=\fr{m}_{\Lambda'}+\fr{a}_{\Lambda'}+\fr{n}_{\Lambda'}$ which is called the Langlands decomposition. A space $\fr{n}_{\Lambda'}$ is called the nilpotent part of $\fr{q}_{\Lambda'}$. Therefore, in order to understand $\fr{n}_{\Lambda'}$, it suffices to study a subset $\Lambda'$ of a simple root $\Lambda$. Moreover, the simply-connected Lie groups corresponding to the nilpotent Lie algebras obtained by these two methods admit left-invariant Ricci soliton metrics (\cite{T2}, \cite{MT}). 
	
	In this paper, we focus on the case where nilpotent Lie algebras, consisting of the two method described above, are of three-step. Semisimple Lie algebras are classified by Dinkin diagrams. Based on this classification, we determine the dimensions by level for the case where the nilpotent parts of parabolic subalgebras are of three-step (see Section\ref{parabolic}). We list the three-step nilpotent Lie algebras of dimension $\leq 10$ obtained as the nilpotent parts of parabolic subalgebras. Moreover, we list the three-step nilpotent Lie algebras obtained by quivers which are dimension $\leq 10$ up to isomorphisms (see Section\ref{quiver}). Our list prove that there exist nilpotent Lie algebras obtained by quivers which cannot be obtained as nilpotent parts of parabolic subalgebras. Also, the opposite exists. 
	
\section{Parabolic subalgebras}
	In this section, we recall properties of parabolic subalgebras of semisimple Lie algebras. The conjugacy classes of parabolic subalgebras of a semisimple Lie algebra are in one-to-one correspondence with the proper subsets of the simple root system. We mention graded Lie algebras, parabolic subalgebras, and restricted root systems.  
	\begin{Def}
		Let $\fr{g}$ be a semisimple Lie algebra. A direct sum decomposition of $\fr{g}$ as vector spaces into subspaces, 
		\[
			\fr{g}=\bigoplus_{k\in\Z}\fr{g}^k
		\]
		is called a {\it gradation} if $[\fr{g}^p, \fr{g}^q]\subset\fr{g}^{p+q}$ holds for every $p$, $q\in\Z$. 
	\end{Def}
	\begin{Prop}
		Let $\fr{g}$ be a Lie algebra and $D\in\Der(\fr{g})$ be a diagonalizable derivation whose eigenvalues are all integer. Then $D$ provides the gradation that is a graded Lie algebra $\fr{g}=\bigoplus_{k\in\Z}\fr{g}^k$ is given by
		\[
			\fr{g}^k=\{X\in\fr{g}\mid D(X)=kX\}. 
		\]
	\end{Prop}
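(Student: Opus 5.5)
The plan is to check the two defining properties of a gradation directly from the hypotheses on $D$.

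First, the direct sum decomposition. Since $D$ is diagonalizable, $\fr{g}$ is the direct sum of the eigenspaces of $D$. All eigenvalues are integers, so every eigenspace has the form $\fr{g}^k=\{X\in\fr{g}\mid D(X)=kX\}$ for some $k\in\Z$. For integers $k$ that are not eigenvalues we have $\fr{g}^k=0$. Eigenspaces for distinct eigenvalues intersect trivially, and more generally they are linearly independent. Hence
\[
\fr{g}=\bigoplus_{k\in\Z}\fr{g}^k
\]
as vector spaces. Only finitely many summands are nonzero when $\fr{g}$ is finite-dimensional, which we assume throughout. This step uses nothing beyond linear algebra.

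Second, the bracket condition. Take $X\in\fr{g}^p$ and $Y\in\fr{g}^q$. Since $D$ is a derivation,
\[
D([X,Y])=[D(X),Y]+[X,D(Y)]=[pX,Y]+[X,qY]=(p+q)[X,Y].
\]
So $[X,Y]\in\fr{g}^{p+q}$. This holds trivially when $[X,Y]=0$, and also when $p+q$ is not an eigenvalue, since then $[X,Y]$ is forced to be $0$. By bilinearity, $[\fr{g}^p,\fr{g}^q]\subset\fr{g}^{p+q}$.

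There is no real obstacle here. The only point needing care is that the hypothesis "diagonalizable" must be taken over the ground field, so that $\fr{g}$ genuinely decomposes into eigenspaces. Integrality of the eigenvalues then ensures the index set can be taken to be $\Z$. The statement uses the definition of gradation given before it, which was phrased for semisimple $\fr{g}$, but the same definition applies verbatim to an arbitrary Lie algebra, and I would read it that way.
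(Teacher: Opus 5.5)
Your argument is correct and complete. Diagonalizability with integer eigenvalues gives $\fr{g}=\bigoplus_{k\in\Z}\fr{g}^k$, and the Leibniz rule gives $D([X,Y])=(p+q)[X,Y]$, so $[X,Y]\in\fr{g}^{p+q}$ directly by the definition of $\fr{g}^{p+q}$; you do not need the case distinction on whether $p+q$ is an eigenvalue, nor the finite-dimensionality assumption. The paper states this proposition without proof as a standard fact, and your proof is exactly the routine verification it implicitly relies on.
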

	\begin{Rem}
		An element $Z\in\fr{g}$ is called a {\it characteristic element} if $\ad_Z\in\Der(\fr{g})$ gives a gradation. 
	\end{Rem}
	\begin{Def}
		Let $\fr{g}$ be a semisimple Lie algebra, $\fr{g}=\fr{k}+\fr{p}$ be the Cartan decomposition, and $\fr{a}$ be a maximal abelian subalgebra of $\fr{p}$. For each $\alpha\in\fr{a}^*$, define a {\it (restricted) root space} $\fr{g}_{\alpha}$ by 
		\[
			\fr{g}_{\alpha}=\{X\in\fr{g}\mid [A, X]=\alpha(A)X\quad \forall A\in\fr{a}\}. 
		\]
		Moreover, define a {\it restricted root system} $\Delta=\Delta(\fr{g}, \fr{a})$ of $\fr{g}$ with respect to $\fr{a}$ by
		\[
			\Delta=\Delta(\fr{g}, \fr{a})=\{\alpha\in\fr{a}^*\mid \alpha\neq0, \fr{g}_{\alpha}\neq0\}. 
		\]
		The following decomposition is called a {\it root space decomposition}, 
		\[
			\fr{g}=\fr{g}_0+\bigoplus_{\alpha\in\Delta}\fr{g}_{\alpha}, 
		\]
		where $\fr{g}_0$ is the centralizer of $\fr{a}$ in $\fr{g}$. 
	\end{Def}
	\begin{Def}
		Let $\fr{g}$ be a semisimple Lie algebra, $\fr{g}=\fr{k}+\fr{p}$ be the Cartan decomposition, $\fr{a}$ be a maximal abelian subalgebra of $\fr{p}$, and $\Delta$ be a restricted root system of $\fr{g}$ with respect to $\fr{a}$. We say a set $\Lambda=\{\alpha_1, \ldots, \alpha_r\}\subset\Delta$ is a {\it simple root system} if the following conditions are satisfied: 
		\begin{enumerate}
			\item $\Lambda$ is basis of $\fr{a}^*$, 
			\item For any $\alpha\in\Lambda$, there exists $c_1, \ldots, c_r\in\Z_{\geq0}$ or $c_1, \ldots, c_r\in\Z_{\leq0}$ such that $\alpha=c_1\alpha_1+\cdots c_r\alpha_r$. 
		\end{enumerate}
		For a root $\alpha=c_1\alpha_1+\cdots c_r\alpha_r\in\Delta$, it is called a {\it positive root} when $c_1, \ldots, c_r\in\Z_{\geq0}$ and a {\it negative root} when $c_1, \ldots, c_r\in\Z_{\leq0}$. Also, a root $\alpha=c_1\alpha_1+\cdots c_r\alpha_r\in\Delta$ is called a {\it highest root} if for any $\alpha=c_1'\alpha_1+\cdots c_r'\alpha_r$, it satisfies $c_1\geq c_1'\ldots c_r\geq c_r'$. 
	\end{Def}
	\begin{Def}
		Let $\fr{g}$ be a semisimple Lie algebra. A subalgebra $\fr{q}$ of $\fr{g}$ is called a {\it parabolic subalgebra} if there exists a nontrivial gradation $\fr{g}=\bigoplus_{k\in\Z}\fr{g}^k$ such that $\fr{q}=\bigoplus_{k\geq0}\fr{g}^k$. 
	\end{Def}
	\begin{Thm}
		Let $\fr{g}$ be a semisimple Lie algebra, $\fr{g}=\fr{k}+\fr{p}$ be the Cartan decomposition, $\fr{a}$ be a maximal abelian subalgebra of $\fr{p}$, $\Delta$ be a restricted root system of $\fr{g}$ with respect to $\fr{a}$, and $\Lambda$ be a simple root system. Then the following hold.  
		\begin{enumerate}
			\item Let $\Lambda'$ be a proper subset of $\Lambda$ and $\langle\Lambda'\rangle$ be a space spanned by $\Lambda'$. Define 
			\[
				\fr{q}_{\Lambda'}=\fr{g}_0+\bigoplus_{\beta\in\Delta^+\cup \langle\Lambda'\rangle}\fr{g}_{\beta}, 
			\]
			where $\Delta^+$ is a set of all positive roots. Then, $\fr{q}_{\Lambda'}$ is a parabolic subalgebra of $\fr{g}$. 
			\item For any parabolic subalgebra $\fr{q}$ of $\fr{g}$, there exists a proper subset $\Lambda'$ of $\Lambda$ such that $\fr{q}$ is isomorphic to $\fr{q}_{\Lambda'}$ up to conjugation. 
		\end{enumerate}
	\end{Thm}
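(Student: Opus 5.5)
We prove the two parts separately. Throughout, $\langle\Lambda'\rangle$ means the span of $\Lambda'$ intersected with $\Delta$. We use the standard facts that $[\fr{g}_\alpha,\fr{g}_\beta]\subset\fr{g}_{\alpha+\beta}$ and that $\fr{a}$ carries the inner product induced by the Killing form.

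For part (1), the plan is to realize $\fr{q}_{\Lambda'}$ as the nonnegative part of a gradation given by a characteristic element in $\fr{a}$. Since $\Lambda$ is a basis of $\fr{a}^*$, there is a unique $Z\in\fr{a}$ with $\alpha_i(Z)=0$ for $\alpha_i\in\Lambda'$ and $\alpha_i(Z)=1$ for $\alpha_i\in\Lambda\setminus\Lambda'$.

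For a root $\beta=\sum c_i\alpha_i$ we then have $\beta(Z)=\sum_{\alpha_i\notin\Lambda'}c_i\in\Z$. Hence $\ad_Z$ acts on $\fr{g}_\beta$ by the integer $\beta(Z)$ and on $\fr{g}_0$ by $0$, so $\ad_Z$ is a diagonalizable derivation with integer eigenvalues. By the Proposition above it defines a gradation
\[
\fr{g}^k=\bigoplus_{\beta(Z)=k}\fr{g}_\beta\quad(k\neq0),\qquad \fr{g}^0=\fr{g}_0+\bigoplus_{\beta\in\langle\Lambda'\rangle}\fr{g}_\beta .
\]
Each root has all $c_i\ge0$ or all $c_i\le0$, so $\beta(Z)\ge0$ exactly when $\beta\in\Delta^+\cup\langle\Lambda'\rangle$. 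It follows that $\bigoplus_{k\ge0}\fr{g}^k=\fr{q}_{\Lambda'}$. The gradation is nontrivial because $\Lambda'\neq\Lambda$: some $\alpha_j\notin\Lambda'$ gives $\fr{g}_{\alpha_j}\subset\fr{g}^1\ne0$.

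For part (2), let $\fr{q}=\bigoplus_{k\ge0}\fr{g}^k$ for a nontrivial gradation. First, every derivation of a semisimple Lie algebra is inner, so the grading derivation equals $\ad_Z$ for some characteristic element $Z$. Since $\ad_Z$ is semisimple with real eigenvalues, $Z$ is a real semisimple (hyperbolic) element.

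The key step, and the main obstacle, is to conjugate $Z$ into $\fr{a}$. The plan is to use a Cartan involution $\theta$ with $\theta Z=-Z$, i.e.\ $Z\in\fr{p}$. Such a $\theta$ exists because $\ad_Z$ is symmetric for a suitable inner product: this is the Mostow-type result that a hyperbolic element lies in $\fr{p}$ for some Cartan decomposition. All Cartan involutions are conjugate under $\mathrm{Int}(\fr{g})$, so after conjugating we may take the fixed decomposition $\fr{k}+\fr{p}$ with $Z\in\fr{p}$. Then $Z$ lies in some maximal abelian subalgebra of $\fr{p}$, and $\mathrm{Ad}(K)$ carries it onto $\fr{a}$. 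Finally, the Weyl group acts transitively on the closed Weyl chambers, so we may assume $Z$ lies in the closed positive chamber, i.e.\ $\alpha_i(Z)\ge0$ for all $i$.

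Now set $\Lambda'=\{\alpha_i : \alpha_i(Z)=0\}$. This is a proper subset of $\Lambda$, since otherwise $Z=0$ and the gradation would be trivial. For $\beta=\sum c_i\alpha_i$ we have $\beta(Z)=\sum c_i\alpha_i(Z)$. Thus $\beta(Z)\ge0$ if and only if $\beta$ is positive, or $\beta$ is negative with all $c_i=0$ for $\alpha_i\notin\Lambda'$, i.e.\ $\beta\in\langle\Lambda'\rangle$. Also $\fr{g}_0\subset\fr{g}^0$ because $Z\in\fr{a}$. Therefore the conjugated $\fr{q}$ equals $\fr{q}_{\Lambda'}$, which proves (2).
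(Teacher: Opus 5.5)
The paper gives no proof of this theorem; it is recalled as standard background. The Remark that follows it, ``every gradation can be constructed from $Z\in\fr{a}$'', is exactly the content of your part (2). Your argument is correct and is the standard proof.

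Part (1) is complete as written. The sign condition on the coefficients is what identifies $\{\beta : \beta(Z)\ge 0\}$ with $\Delta^+\cup(\langle\Lambda'\rangle\cap\Delta)$, and also $\{\beta:\beta(Z)=0\}$ with $\langle\Lambda'\rangle\cap\Delta$.

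In part (2), the only non-elementary input is that the hyperbolic element $Z$ is $\mathrm{Int}(\fr{g})$-conjugate into $\fr{p}$. Citing Mostow for this is acceptable. Your phrase ``$\ad_Z$ is symmetric for a suitable inner product'' must mean an inner product of the form $-B(\cdot,\theta\,\cdot)$ for a Cartan involution $\theta$. For an arbitrary inner product it holds for any real diagonalizable operator and gives nothing.

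If you want a self-contained replacement, argue as follows:
\begin{enumerate}
\item Being semisimple, $Z$ lies in a Cartan subalgebra $\fr{h}$, which can be conjugated to be $\theta$-stable, so $\fr{h}=(\fr{h}\cap\fr{k})\oplus(\fr{h}\cap\fr{p})$.
\item Write $Z=T+A$ accordingly. The eigenvalues of $\ad_Z$ on the complexification are $\alpha(T)+\alpha(A)$, with $\alpha(T)\in i\mathbb{R}$ and $\alpha(A)\in\mathbb{R}$.
\item Since the eigenvalues are real, $\alpha(T)=0$ for every root, so $T=0$ and $Z\in\fr{p}$.
\end{enumerate}
From there your $\mathrm{Ad}(K)$ and Weyl-chamber steps go through unchanged.

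You also handle correctly a point that is easy to miss. After moving $Z$ into the closed positive chamber, the values $\alpha_i(Z)$ are arbitrary nonnegative integers, not just $0$ or $1$. So the gradation need not be the one constructed in part (1), but its nonnegative part depends only on the zero set $\Lambda'$, and that is all the statement requires.
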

	\begin{Prop}
		Let $\fr{g}=\bigoplus_{k\in\Z}\fr{g}^k$ be a graded Lie algebra and $\fr{q}=\bigoplus_{k\geq0}\fr{g}^k$ be a parabolic subalgebra of $\fr{g}$. Then, the following hold. 
		\begin{enumerate}
			\item $\fr{l}_{\fr{q}}\coloneqq\fr{g}^0$ is a subalgebra of $\fr{q}$, 
			\item $\fr{n}_{\fr{q}}\coloneqq\bigoplus_{k>0}\fr{g}^k$ is a nilpotent subalgebra of $\fr{q}$, 
			\item $\fr{q}=\fr{l}_{\fr{q}}+\fr{n}_{\fr{q}}$ is the direct sum as vector spaces which is called the Chevalley decomposition. 
		\end{enumerate}
	\end{Prop}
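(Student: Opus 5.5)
The three assertions all follow from the bracket relation $[\fr{g}^p,\fr{g}^q]\subset\fr{g}^{p+q}$ of the gradation. The only one needing an extra ingredient is nilpotency in (2), where I will use that a finite-dimensional graded Lie algebra has only finitely many nonzero graded pieces. Throughout, $\fr{g}$ is finite dimensional (it is semisimple).

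For (1), take $X,Y\in\fr{g}^0$. The bracket relation gives $[X,Y]\in\fr{g}^{0+0}=\fr{g}^0$, so $\fr{l}_{\fr{q}}=\fr{g}^0$ is closed under the bracket. It sits inside $\fr{q}=\bigoplus_{k\geq0}\fr{g}^k$ by definition, so it is a subalgebra of $\fr{q}$. In the same way, $\fr{q}$ itself is a subalgebra: for $p,q\geq0$ we have $p+q\geq0$.

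For (2), closure works the same way. If $p,q>0$, then $[\fr{g}^p,\fr{g}^q]\subset\fr{g}^{p+q}$ with $p+q>0$, so $\fr{n}_{\fr{q}}$ is a subalgebra of $\fr{q}$. For nilpotency, I argue as follows.
\begin{itemize}
\item The sum $\fr{g}=\bigoplus_k\fr{g}^k$ is direct and $\fr{g}$ is finite dimensional, so only finitely many $\fr{g}^k$ are nonzero.
\item Hence there is an $N$ with $\fr{g}^k=0$ for all $k>N$.
\item Let $C^1=\fr{n}_{\fr{q}}$ and $C^{j+1}=[\fr{n}_{\fr{q}},C^j]$ be the lower central series. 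By induction on $j$, using the bracket relation, $C^j\subset\bigoplus_{k\geq j}\fr{g}^k$. The inductive step holds because a bracket of an element of degree $\geq1$ with an element of degree $\geq j$ has degree $\geq j+1$.
\item Therefore $C^{N+1}=0$, and $\fr{n}_{\fr{q}}$ is nilpotent of step at most $N$.
\end{itemize}
An alternative route is Engel's theorem: for $X\in\fr{g}^p$ with $p>0$, $\ad_X$ raises degree by $p$ and is therefore nilpotent. The lower central series argument is more direct, so I would use that.

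For (3), I would simply regroup the direct sum:
\[
\fr{q}=\bigoplus_{k\geq0}\fr{g}^k=\fr{g}^0\oplus\bigoplus_{k>0}\fr{g}^k=\fr{l}_{\fr{q}}\oplus\fr{n}_{\fr{q}}.
\]
This is a direct sum of vector spaces. One can also note that $\fr{n}_{\fr{q}}$ is an ideal of $\fr{q}$, since degree $\geq0$ bracketed with degree $>0$ lands in degree $>0$, but the statement does not require this.

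The only step that is not a one-line degree count is the finiteness of the grading used in (2). It rests on the directness of the sum together with $\dim\fr{g}<\infty$, so I expect no real obstacle there.
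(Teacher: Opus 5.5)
Your proof is correct and complete. The paper states this proposition as background and gives no proof; your argument is the standard one. You use degree counting from $[\fr{g}^p,\fr{g}^q]\subset\fr{g}^{p+q}$, and for nilpotency the inclusion $C^j\subset\bigoplus_{k\geq j}\fr{g}^k$ together with the finiteness of the grading forced by $\dim\fr{g}<\infty$.

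One minor remark on your Engel aside, which your main argument does not rely on. Engel's theorem needs $\mathrm{ad}_X$ to be nilpotent for every $X\in\fr{n}_{\fr{q}}$, not just for homogeneous $X$. This still holds, since any such $X$ maps $\bigoplus_{k\geq j}\fr{g}^k$ into $\bigoplus_{k\geq j+1}\fr{g}^k$.
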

	\begin{Rem}
		Note that denote the Chevalley decomposition of a parabolic subalgebra $\fr{q}_{\Lambda'}$ defined by $\Lambda'\subsetneq\Lambda$ as $\fr{l}_{\Lambda'}+\fr{n}_{\Lambda'}$. Every gradation can constructed from $Z\in\fr{a}$. Denote by $\Lambda=\{\alpha_1, \ldots, \alpha_r\}$, and let $\{H^1, \ldots, H^r\}$ be the dual basis of $\Lambda$, that is, $\alpha_i(H^j)=\delta_{ij}$. 
	\end{Rem}
	
	\begin{Prop}
		Let $Z=H^{i_1}+\cdots+H^{i_m}$ satisfying $i_1<\cdots<i_m$ be a characteristic element, $\fr{q}$ be a parabolic subalgebra defined by $Z$, and $\fr{q}=\fr{l}_{\fr{q}}+\fr{n}_{\fr{q}}$ be the Chevalley decomposition. Then the following hold.   
		\begin{enumerate}
			\item $\fr{a}_{\fr{q}}\coloneqq\mathrm{span}\{H^{i_1}, \ldots, H^{i_m}\}$ is an abelian subalgebra of $\fr{l}_{\fr{q}}$, 
			\item $\fr{m}_{\fr{q}}\coloneqq\fr{l}_{\fr{q}}\ominus\fr{a}_{\fr{q}}$ is subalgebra of $\fr{l}_{\fr{q}}$, 
			\item $\fr{q}=\fr{m}_{\fr{q}}+\fr{a}_{\fr{q}}+\fr{n}_{\fr{q}}$ is the direct sum as vector spaces which is called the {\it Langlands decomposition}. 
		\end{enumerate} 
	\end{Prop}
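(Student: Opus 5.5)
The plan is to work in the restricted root space decomposition and read off $\fr{l}_{\fr{q}}$ from the grading given by $Z$. Since $Z\in\fr{a}$ and $\alpha_i(H^j)=\delta_{ij}$, for a root $\beta=c_1\alpha_1+\cdots+c_r\alpha_r$ we have $\ad_Z|_{\fr{g}_\beta}=\beta(Z)=c_{i_1}+\cdots+c_{i_m}$. Also $\ad_Z$ vanishes on $\fr{g}_0$, because $\fr{g}_0$ is the centralizer of $\fr{a}$. Hence
\[
\fr{g}^k=\bigoplus_{\beta(Z)=k}\fr{g}_\beta\quad(k\neq0),\qquad \fr{l}_{\fr{q}}=\fr{g}^0=\fr{g}_0+\bigoplus_{\beta(Z)=0}\fr{g}_\beta .
\]
In particular $\fr{a}\subset\fr{g}_0\subset\fr{l}_{\fr{q}}$, so each $H^{i_j}$ lies in $\fr{l}_{\fr{q}}$. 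Since $\fr{a}$ is abelian, the span $\fr{a}_{\fr{q}}$ is an abelian subalgebra of $\fr{l}_{\fr{q}}$. This proves (1).

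For (2), I will interpret $\ominus$ as the orthogonal complement inside $\fr{l}_{\fr{q}}$ with respect to $B_\theta(X,Y)=-B(X,\theta Y)$, where $B$ is the Killing form and $\theta$ the Cartan involution. The key observation is that $\fr{a}_{\fr{q}}$ lies in the center of $\fr{l}_{\fr{q}}$. Indeed, for $X\in\fr{g}_\beta$ with $\beta(Z)=0$, every root appearing in $\fr{l}_{\fr{q}}$ has $c_{i_j}=0$ for all $j$. This uses that the $c$'s of a root are all of one sign, so their sum vanishes only if each term vanishes. Therefore $[H^{i_j},X]=\beta(H^{i_j})X=c_{i_j}X=0$. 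On $\fr{g}_0$ the bracket with $H^{i_j}$ is zero as well.

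It follows that $\fr{l}_{\fr{q}}$ is $\theta$-stable: $\theta\fr{g}_\beta=\fr{g}_{-\beta}$, $\theta\fr{g}_0=\fr{g}_0$, and the condition on the coefficients is symmetric under $\beta\mapsto-\beta$. The complement can then be described concretely as
\[
\fr{m}_{\fr{q}}=\{X\in\fr{l}_{\fr{q}}\mid B(X,H)=0\ \forall H\in\fr{a}_{\fr{q}}\},
\]
using $\theta H=-H$ for $H\in\fr{a}$. To see that this is a subalgebra, take $X,Y\in\fr{m}_{\fr{q}}$ and $H\in\fr{a}_{\fr{q}}$. By invariance of $B$ and centrality of $H$,
\[
B([X,Y],H)=B(X,[Y,H])=0 .
\]
So $[X,Y]\in\fr{m}_{\fr{q}}$; in fact $\fr{m}_{\fr{q}}$ is an ideal of $\fr{l}_{\fr{q}}$.

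For (3), it suffices to show $\fr{l}_{\fr{q}}=\fr{m}_{\fr{q}}\oplus\fr{a}_{\fr{q}}$ and then use the Chevalley decomposition $\fr{q}=\fr{l}_{\fr{q}}+\fr{n}_{\fr{q}}$. Since $B_\theta$ is positive definite on $\fr{g}$, its restriction to $\fr{l}_{\fr{q}}$ is an inner product, so the orthogonal complement of a subspace gives a direct sum decomposition. The only genuinely delicate point of the whole argument is the vanishing $c_{i_j}=0$ for roots in $\fr{l}_{\fr{q}}$, which rests on the sign condition in the definition of a simple root system. The remaining steps are routine consequences of invariance of $B$ and positivity of $B_\theta$.
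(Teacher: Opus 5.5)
The paper states this proposition without proof, as background from the standard structure theory of parabolic subalgebras, so there is no argument in the text to compare against. Your proof is correct and complete.

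The essential ingredients are all present:
\begin{enumerate}
\item the root-space description $\fr{l}_{\fr{q}}=\fr{g}_0+\bigoplus_{\beta(Z)=0}\fr{g}_\beta$;
\item the sign condition, which forces $c_{i_1}=\cdots=c_{i_m}=0$ whenever $\beta(Z)=0$, so that $\fr{a}_{\fr{q}}$ is central in $\fr{l}_{\fr{q}}$, i.e.\ $\fr{l}_{\fr{q}}=Z_{\fr{g}}(\fr{a}_{\fr{q}})$;
\item invariance of $B$;
\item positive definiteness of $B_\theta$.
\end{enumerate}
You in fact get the stronger statement that $\fr{m}_{\fr{q}}$ is an ideal, so $\fr{l}_{\fr{q}}=\fr{m}_{\fr{q}}\oplus\fr{a}_{\fr{q}}$ is a direct sum of Lie algebras.

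Compare this with the more hands-on route. There one writes
$\fr{m}_{\fr{q}}=Z_{\fr{k}}(\fr{a})\oplus(\fr{a}\ominus\fr{a}_{\fr{q}})\oplus\bigoplus_{\beta(Z)=0,\,\beta\neq0}\fr{g}_\beta$
and checks closure root space by root space. The only case needing thought is that the $\fr{a}$-component of $[\fr{g}_\beta,\fr{g}_{-\beta}]$ lies in $\mathbb{R}H_\beta$, with $H_\beta\in\fr{a}$ the $B$-dual of $\beta$, and $H_\beta\perp\fr{a}_{\fr{q}}$. Your invariance argument avoids all such bracket computations. The explicit description can still be read off from your definition, because $B(\fr{g}_\beta,\fr{a})=0$ for $\beta\neq0$ and $B(\fr{k},\fr{p})=0$.

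One cosmetic remark: the $\theta$-stability of $\fr{l}_{\fr{q}}$ does not ``follow'' from centrality. It follows from the root-space description, as you then explain. It is also never used afterwards, since identifying $\fr{m}_{\fr{q}}$ with the $B$-orthogonal complement only needs $\theta H=-H$ for $H\in\fr{a}$. That sentence can simply be dropped.
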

	We call $\fr{n}_{\fr{q}}$ and $\fr{a}_{\fr{q}}+\fr{n}_{\fr{q}}$ a {\it nilpotent part} and a {\it solvable part} of  a parabolic subalgebra $\fr{q}$, responsively.  
	\begin{Prop}\label{nil}
		Let $Z=H^{i_1}+\cdots+H^{i_m}$ satisfying $i_1<\cdots<i_m$ be a characteristic element, $\fr{q}$ be a parabolic subalgebra defined by $Z$, and $\fr{n}_{\fr{q}}$ be a nilpotent part of $\fr{q}$. Then, the following hold. 
		\begin{enumerate}
			\item The number of step of $\fr{n}_{\fr{q}}$ is $\tilde{\alpha}(Z)$, 
			\item If $\fr{n}_{\fr{q}}=\fr{g}^1+\cdots\fr{g}^{r}$, then $\fr{g}^k=\bigoplus_{\alpha(Z)=k}\fr{g}_{\alpha}$. 
		\end{enumerate}
	\end{Prop}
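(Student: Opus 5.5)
Part (2) comes first because part (1) follows from it. Since $Z=H^{i_1}+\cdots+H^{i_m}\in\fr{a}$, the derivation $\ad_Z$ acts on each restricted root space $\fr{g}_\alpha$ by the scalar $\alpha(Z)$, and it kills $\fr{g}_0$. Write $\alpha=\sum_j c_j\alpha_j$ and use $\alpha_j(H^k)=\delta_{jk}$. This gives $\alpha(Z)=c_{i_1}+\cdots+c_{i_m}\in\Z$. So $\ad_Z$ is diagonalizable with integer eigenvalues, and by the proposition on gradations from derivations, its $k$-eigenspace is
\[
\fr{g}^k=\bigoplus_{\alpha\in\Delta\cup\{0\},\ \alpha(Z)=k}\fr{g}_\alpha ,
\]
using the root space decomposition $\fr{g}=\fr{g}_0+\bigoplus_{\alpha\in\Delta}\fr{g}_\alpha$. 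For $k\ge1$ the term $\fr{g}_0$ drops out. Every root has all $c_j\ge0$ or all $c_j\le0$, so $\alpha(Z)>0$ forces $\alpha\in\Delta^+$. This proves (2), and it also shows $\fr{n}_{\fr{q}}=\bigoplus_{\alpha\in\Delta^+,\,\alpha(Z)>0}\fr{g}_\alpha$.

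For (1), the largest $k$ with $\fr{g}^k\neq0$ is $\max_{\alpha\in\Delta^+}\alpha(Z)$. The highest root $\tilde\alpha$ dominates every root coefficientwise, so this maximum equals $\tilde\alpha(Z)=r$. Hence $\fr{n}_{\fr{q}}=\fr{g}^1+\cdots+\fr{g}^r$ with $\fr{g}^r\neq0$. The grading gives $C^k(\fr{n}_{\fr{q}})\subset\bigoplus_{j\ge k}\fr{g}^j$, where $C^1=\fr{n}_{\fr{q}}$ and $C^{k+1}=[\fr{n}_{\fr{q}},C^k]$. Therefore $C^{r+1}=0$, and the step is at most $r$.

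The main obstacle is the reverse inequality: I must show $C^r\neq0$, and for that it suffices to show $\fr{n}_{\fr{q}}$ is generated by $\fr{g}^1$, i.e. $\fr{g}^{k+1}=[\fr{g}^1,\fr{g}^k]$ for $1\le k<r$. I would use the standard root-system fact that every positive root $\beta$ that is not simple can be written as $\beta=\gamma+\alpha_j$ with $\gamma\in\Delta^+$ and $\alpha_j\in\Lambda$, together with $[\fr{g}_{\alpha_j},\fr{g}_\gamma]=\fr{g}_{\gamma+\alpha_j}$ whenever $\gamma+\alpha_j$ is a root. The latter comes from $\mathfrak{sl}_2$-representation theory applied to the restricted root strings, via $\theta$-invariance and the Killing form.

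I then argue by induction on $\beta(Z)\ge2$. Take $\beta$ with $\beta(Z)=k+1$ and peel off simple roots $\alpha_j$ with $j\notin\{i_1,\ldots,i_m\}$; each such step leaves $\beta(Z)$ unchanged. This continues until the removed simple root is some $\alpha_{i_l}$. To make it work I would need the refinement that the peeling can be arranged so as to end with a root of $Z$-degree $k$. This is the standard statement that $\fr{g}^{k+1}=[\fr{g}^1,\fr{g}^k]$, which holds because $\fr{g}^1$ generates $\fr{n}_{\fr{q}}$. I would justify it as follows: $\fr{g}^{k+1}$ is an irreducible-summand-sum of $\fr{g}^0$-modules, and its lowest weight vectors with respect to $\Lambda\setminus\{\alpha_{i_1},\ldots,\alpha_{i_m}\}$ are reached from $\fr{g}^k$ by brackets with the $\fr{g}_{\alpha_{i_l}}\subset\fr{g}^1$. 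This step is where I expect the real work. Given it, $C^k=\bigoplus_{j\ge k}\fr{g}^j$, so $C^r=\fr{g}^r\neq0$ and the step is exactly $\tilde\alpha(Z)$.
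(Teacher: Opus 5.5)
The paper states this proposition without proof, as background, so your argument has to stand on its own. Part (2) and the upper bound $C^{r+1}(\fr{n}_{\fr{q}})=0$ in part (1) are fine. The gap is the lower bound $C^r(\fr{n}_{\fr{q}})\neq0$, which you do not establish. Your justification of $\fr{g}^{k+1}=[\fr{g}^1,\fr{g}^k]$ is circular: ``holds because $\fr{g}^1$ generates $\fr{n}_{\fr{q}}$'' is exactly the claim as you set it up. The lowest-weight sketch is not carried out. It also silently needs that $[\fr{g}^1,\fr{g}^k]$ is a $\fr{g}^0$-submodule, and a description of the lowest restricted weights of irreducible $\fr{g}^0$-summands, neither of which you supply. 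Moreover, the ``refinement'' you ask for is automatic. Every simple root has $Z$-degree $0$ or $1$, and each peeling step lowers the height. So starting from $\beta(Z)=k+1\geq2$ you must eventually remove some $\alpha_{i_l}$, and the first time you do, you are at a root of degree $k$. Where the peeling ends is not the problem.

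The actual obstacle is that the intermediate brackets are with $\fr{g}_{\alpha_j}$ for $j\notin I:=\{i_1,\ldots,i_m\}$. These lie in $\fr{g}^0$, not in $\fr{n}_{\fr{q}}$, so they are not brackets inside $\fr{n}_{\fr{q}}$ and say nothing directly about its lower central series. The missing idea is that $[\fr{g}^1,\fr{g}^k]$ is $\mathrm{ad}(\fr{g}^0)$-stable, by the Jacobi identity and $[\fr{g}^0,\fr{g}^j]\subset\fr{g}^j$:
\[
[\fr{g}^0,[\fr{g}^1,\fr{g}^k]]\subset[[\fr{g}^0,\fr{g}^1],\fr{g}^k]+[\fr{g}^1,[\fr{g}^0,\fr{g}^k]]\subset[\fr{g}^1,\fr{g}^k].
\]
Then strong induction on the height of $\beta\in\Delta^+$ with $\beta(Z)=k+1$ finishes the argument. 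Write $\beta=\gamma+\alpha_j$ with $\gamma\in\Delta^+$.
\begin{itemize}
\item If $j\in I$, then $\gamma(Z)=k$ and $\fr{g}_\beta=[\fr{g}_{\alpha_j},\fr{g}_\gamma]\subset[\fr{g}^1,\fr{g}^k]$.
\item If $j\notin I$, then $\gamma(Z)=k+1$, so $\fr{g}_\gamma\subset[\fr{g}^1,\fr{g}^k]$ by induction, and $\fr{g}_\beta=[\fr{g}_{\alpha_j},\fr{g}_\gamma]\subset[\fr{g}^1,\fr{g}^k]$ by the stability.
\end{itemize}
Hence $\fr{g}^{k+1}=[\fr{g}^1,\fr{g}^k]$ for all $k\geq1$, so $C^k=\bigoplus_{j\geq k}\fr{g}^j$ and $C^r=\fr{g}^r\supset\fr{g}_{\tilde\alpha}\neq0$. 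No lowest-weight theory is needed.

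One small point about the bracket fact you cite. For a fixed nonzero $X\in\fr{g}_{\alpha_j}$, the map $\mathrm{ad}X\colon\fr{g}_\gamma\to\fr{g}_{\gamma+\alpha_j}$ is onto only when $\langle\gamma,\alpha_j^\vee\rangle\geq-1$. Otherwise $\langle\alpha_j,\gamma^\vee\rangle=-1$, because $\gamma$ and $\alpha_j$ are positive and hence not antiparallel. In that case run the $\mathfrak{sl}_2$ argument on a nonzero vector of $\fr{g}_\gamma$ instead.
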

	Therefore, in order to understand $\fr{n}_{\Lambda'}$, it suffices to study a subset $\Lambda'$ of a simple root $\Lambda$.  
\section{Three-step nilpotent parts}\label{parabolic}
	Semisimple Lie algebras are classified by Dinkin diagrams. We refer to \cite{K} for information on positive roots and the highest root of each type. Also, from Proposition\ref{nil}, a characteristic element $Z$ for which the nilpotent part is of three-step is determined by the highest root. Regarding multiplicities, we refer to \cite{T1}.
	\subsection{Nilpotent parts of parabolic subalgebras of $A_n$}
		A set of all positive roots, the highest root of $A_n$, characteristic elements where nilpotent parts are of three-step are as follow: 
		\begin{itemize}
			\item a set of all positive roots $\Delta^+=\{e_i-e_j\mid 1\leq i< j\leq n+1\}$, 
			\item $\alpha_i=e_i-e_{i+1}$ for $1\leq i\leq n$, 
			\item the highest root $\tilde{\alpha}=\alpha_1+\cdots+\alpha_n$, 
			\item $Z=H^{k_1}+H^{k_2}+H^{k_3}$ with $1\leq k_1<k_2<k_3\leq n$. 
		\end{itemize}
		If we count the positive roots of $Z$ by level, the results are shown as in Table\ref{A_n}. 
		\begin{table}[H]
			\begin{center}
				\begin{tabular}{|c|c|}
					\hline
					level & the number of roots $e_i-e_j$   \\ \hline
					3 & $k_1(n-k_3+1)$  \\
					2 & $k_1(k_3-k_2)+(k_2-k_1)(n-k_3+1)$ \\
					1 & $(k_2-k_1)(k_3-k_2+k_1)+(k_3-k_2)(n-k_3+1)$  \\ \hline
				\end{tabular}
			\end{center}
			\caption{The number of positive roots by $H^{k_1}+H^{k_2}+H^{k_3}$ on $A_n$, by level}
			\label{A_n}
		\end{table}
		Let $\ell_i$ be the number of level $i$ roots for $i=1, 2, 3$. When all cases are calculated, the result is as shown in Table\ref{A_n-multi}. 
		\begin{table}[H]
            		\begin{center}
            			\begin{tabular}{|c|c|c|c|}
            				\hline
            				multiplicities & $\mathrm{dim}~\fr{g}^3$ & $\mathrm{dim}~\fr{g}^2$ & $\mathrm{dim}~\fr{g}^1$ \\ \hline
            				1 & $\ell_3$ & $\ell_2$ & $\ell_1$ \\ \hline
            				2 & $2\ell_3$ & $2\ell_2$ & $2\ell_1$ \\ \hline	
            				4 & $4\ell_3$ & $4\ell_2$ & $4\ell_1$ \\ \hline			
            			\end{tabular}
            		\end{center}
			\caption{Dimensions of the nilpotent parts by $H^{k_1}+H^{k_2}+H^{k_3}$, by level}
			\label{A_n-multi}
		\end{table}
	\subsection{Nilpotent parts of parabolic subalgebras of $B_n$}
		A set of all positive roots, the highest root of $B_n$, characteristic elements where nilpotent parts are of three-step are as follow: 
		\begin{itemize}
			\item a set of all positive roots $\Delta^+=\{e_i\pm e_j\mid 1\leq i <j\leq n\}\cup\{e_i \mid1\leq i\leq n\}$, 
			\item $\alpha_i=e_i-e_{i+1}$ for $1\leq i\leq n-1$, $\alpha_n=e_n$
			\item the highest root $\tilde{\alpha}=\alpha_1+2\alpha_2+\cdots+2\alpha_n$,  
			\item $Z=H^1+H^k$ with $2\leq k\leq n$. 
		\end{itemize}
		If we count the positive roots of $Z$ by level, the results are shown as in Table\ref{B_n}. 
		\begin{table}[H]
            		\begin{center}
            			\begin{tabular}{|c|c|c|}
            				\hline
            				level & the number of long roots $e_i\pm e_j$ & the number of short roots $e_i$ \\ \hline
            				3 & $k-1$ & 0 \\
            				2 & $k(n-k+1)-2$ & 1 \\
            				1 & $(k-1)(2(n-k)+1)$ & $k-1$ \\ \hline
            			\end{tabular}
            		\end{center}
			\caption{The number of positive roots by $H^1+H^k$ on $B_n$, by level}
			\label{B_n}
		\end{table}
		When all cases are calculated, the result is as shown in Table\ref{B_n-multi}. 
		\begin{table}[H]
            		\begin{center}
            			\begin{tabular}{|c|c|c|c|c|}
            				\hline
            				\multicolumn{2}{|c|}{multiplicities} & \multirow{2}{*}{$\mathrm{dim}~\fr{g}^3$} & \multirow{2}{*}{$\mathrm{dim}~\fr{g}^2$} & \multirow{2}{*}{$\mathrm{dim}~\fr{g}^1$} \\
            				\cline{1-2}
            				long & short &  &  &  \\ \hline
            				2 & 2 & $2(k-1)$ & $2(k(n-k+1)-1)$ & $4(k-1)(n-k+1)$ \\ \hline
            				1 & $m$ & $k-1$ & $k(n-k+1)+m-2$ & $(k-1)(2(n-k)+m+1)$  \\ \hline
            			\end{tabular}
            		\end{center}
			\caption{Dimensions of the nilpotent parts by $H^1+H^k$, by level}
			\label{B_n-multi}
		\end{table}
	\subsection{Nilpotent parts of parabolic subalgebras of $C_n$}
		A set of all positive roots, the highest root of $C_n$, characteristic elements where nilpotent parts are of three-step are as follow: 
		\begin{itemize}
			\item a set of all positive roots $\Delta^+=\{e_i\pm e_j \mid 1\leq i<j\leq n\}\cup\{2e_i\mid 1\leq i\leq n\}$, 
			\item $\alpha_i=e_i-e_{i+1}$ for $1\leq i\leq n-1$, $\alpha_n=2e_n$
			\item the highest root $\tilde{\alpha}=2\alpha_1+\cdots+2\alpha_{n-1}+\alpha_n$, 
			\item $Z=H^k+H^n$ with $1\leq k\leq n-1$. 
		\end{itemize}
		If we count the positive roots of $Z$ by level, the results are shown as in Table\ref{C_n}.  
		\begin{table}[H]
            		\begin{center}
            			\begin{tabular}{|c|c|c|}
            				\hline
            				level & the number of short roots $e_i\pm e_j$ & the number of long roots $(2e_i)$ \\ \hline
            				3 & $(1/2)k(k-1)$ & $k$ \\
            				2 & $k(n-k)$ & 0 \\
            				1 & $(1/2)(n-k)(n+k-1)$ & $n-k$ \\ \hline
            			\end{tabular}
            		\end{center}
			\caption{The number of positive roots by $H^k+H^n$ on $C_n$, by level}
			\label{C_n}
		\end{table}
		When all cases are calculated, the result is as shown in Table\ref{C_n-multi}. 
		\begin{table}[H]
            		\begin{center}
            			\begin{tabular}{|c|c|c|c|c|}
            				\hline
            				\multicolumn{2}{|c|}{multiplicities} & \multirow{2}{*}{$\mathrm{dim}~\fr{g}^3$} & \multirow{2}{*}{$\mathrm{dim}~\fr{g}^2$} & \multirow{2}{*}{$\mathrm{dim}~\fr{g}^1$} \\
            				\cline{1-2}
            				short & long &  &  &  \\ \hline
            				1 & 1 & $(1/2)k(k+1)$ & $k(n-k)$ & $(1/2)(n-k)(n+k+1)$ \\ \hline
            				2 & 2 & $k(k+1)$ & $2k(n-k)$ & $(n-k)(n+k+1)$  \\ \hline
            				4 & 3 & $k(2k+1)$ & $4k(n-k)$ & $(n-k)(2(n+k)+1)$  \\ \hline
            				2 & 1 & $k^2$ & $2k(n-k)$ & $(n-k)(n+k)$  \\ \hline
            				4 & 1 & $k(2k-1)$ & $4k(n-k)$ & $(n-k)(2(n+k)-1)$  \\ \hline
            			\end{tabular}
            		\end{center}
			\caption{Dimensions of the nilpotent parts by $H^k+H^n$, by level}
			\label{C_n-multi}
		\end{table}
	\subsection{Nilpotent parts of parabolic subalgebras of $D_n$}
		A set of all positive roots, the highest root of $D_n$, characteristic elements where nilpotent parts are of three-step are as follow: 
		\begin{itemize}
			\item a set of all positive roots $\Delta^+=\{e_i\pm e_j \mid 1\leq i<j\leq n\}$, 
			\item $\alpha_i=e_i-e_{i+1}$ for $1\leq i\leq n-1$, $\alpha_n=e_{n-1}+e_n$, 
			\item the highest root $\tilde{\alpha}=\alpha_1+2\alpha_2+\cdots+2\alpha_{n-2}+\alpha_{n-1}+\alpha_n$, 
			\item $Z=H^1+H^{n-1}+H^n, H^k+H^{\ell}$ with $2\leq k\leq n-2$, $\ell=1, n-1, n$. 
		\end{itemize}
		Since the Dinkin diagram is symmetric is cases $Z=H^k+H^{n-1}$ and $Z=H^k+H^n$, we can examine them together. Therefore, we count the positive roots of $Z$ by level in three cases--$Z=H^1+H^{n-1}+H^n$, $Z=H^1+H^k$ and $Z=H^k+H^{n-1}(H^k+H^n)$. First, In the case of $Z=H^1+H^{n-1}+H^n$. the results are shown as in Table\ref{D_n-1}. 
		\begin{table}[H]
            		\begin{center}
            			\begin{tabular}{|c|c|c|}
            				\hline
            				level & the number of roots $e_i\pm e_j$  \\ \hline
            				3 & $n-2$  \\
            				2 & $(1/2)(n-3)(n-2)+2$ \\
            				1 & $3(n-2)$  \\ \hline
            			\end{tabular}
            		\end{center}
			\caption{The number of positive roots by $H^1+H^{n-1}+H^n$ on $D_n$, by level}
			\label{D_n-1}
		\end{table}
		When all cases are calculated, the result is as shown in Table\ref{D_n-multi-1}. 
		\begin{table}[H]
            		\begin{center}
            			\begin{tabular}{|c|c|c|c|}
            				\hline
            				multiplication& $\mathrm{dim}~\fr{g}^3$ & $\mathrm{dim}~\fr{g}^2$ & $\mathrm{dim}~\fr{g}^1$ \\ \hline
            				1 & $n-2$ & $(1/2)(n-3)(n-2)+2$ & $3(n-2)$ \\ \hline
            				2 & $2(n-2)$ & $(n-3)(n-2)+4$ & $6(n-2)$ \\ \hline				
            			\end{tabular}
            		\end{center}
			\caption{Dimensions of the nilpotent parts by $H^1+H^{n-1}+H^n$, by level}
			\label{D_n-multi-1}
		\end{table}
		Next, in the case of $Z=H^1+H^k$. the results are shown as in Table\ref{D_n-2}. 
		\begin{table}[H]
            		\begin{center}
            			\begin{tabular}{|c|c|c|}
            				\hline
            				level & the number of roots $e_i\pm e_j$  \\ \hline
            				3 & $k-1$  \\
            				2 & $2(n-k)+(1/2)(k-2)(k-1)$ \\
            				1 & $(k-1)(2(n-k)+1)$  \\ \hline
            			\end{tabular}
            		\end{center}
			\caption{The number of positive roots by $H^1+H^k$ on $D_n$, by level}
			\label{D_n-2}
		\end{table}
		When all cases are calculated, the result is as shown in Table\ref{D_n-multi-2}. 
		\begin{table}[H]
            		\begin{center}
            			\begin{tabular}{|c|c|c|c|}
            				\hline
            				multiplication& $\mathrm{dim}~\fr{g}^3$ & $\mathrm{dim}~\fr{g}^2$ & $\mathrm{dim}~\fr{g}^1$ \\ \hline
            				1 & $k-1$ & $2(n-k)+(1/2)(k-2)(k-1)$ & $(k-1)(2(n-k)+1)$ \\ \hline
            				2 & $2(k-1)$ & $4(n-k)+(k-2)(k-1)$ & $2(k-1)(2(n-k)+1)$ \\ \hline				
            			\end{tabular}
            		\end{center}
			\caption{Dimensions of the nilpotent parts by $H^1+H^k$, by level}
			\label{D_n-multi-2}
		\end{table}
		Finally, in the case of $Z=H^k+H^{n-1}$ or $H^k+H^n$. the results are shown as in Table\ref{D_n-3}. 
		\begin{table}[H]
            		\begin{center}
            			\begin{tabular}{|c|c|c|}
            				\hline
            				level & the number of roots $e_i\pm e_j$  \\ \hline
            				3 & $(1/2)k(k-1)$  \\
            				2 & $k(n-k)$ \\
            				1 & $(1/2)(n-k)(n+k-1)$  \\ \hline
            			\end{tabular}
            		\end{center}
			\caption{The number of positive roots by $H^k+H^{n-1}$ or $H^k+H^n$ on $D_n$, by level}
			\label{D_n-3}
		\end{table}
		When all cases are calculated, the result is as shown in Table\ref{D_n-multi-3}. 
		\begin{table}[H]
            		\begin{center}
            			\begin{tabular}{|c|c|c|c|}
            				\hline
            				multiplication& $\mathrm{dim}~\fr{g}^3$ & $\mathrm{dim}~\fr{g}^2$ & $\mathrm{dim}~\fr{g}^1$ \\ \hline
            				1 & $(1/2)k(k-1)$ & $k(n-k)$ & $(1/2)(n-k)(n+k-1)$ \\ \hline
            				2 & $k(k-1)$ & $2k(n-k)$ & $(n-k)(n+k-1)$ \\ \hline				
            			\end{tabular}
            		\end{center}
			\caption{Dimensions of the nilpotent parts by $H^k+H^{n-1}$ or $H^k+H^n$, by level}
			\label{D_n-multi-3}
		\end{table}
	\subsection{Nilpotent parts of parabolic subalgebras of $E_6$}
		A set of all positive roots, the highest root of $E_6$, characteristic elements where nilpotent parts are of three-step are as follow: 
		\begin{itemize}
			\item a set of all positive roots refers to \cite{K}
			\item the highest root $\tilde{\alpha}=\alpha_1+2\alpha_2+2\alpha_3+3\alpha_4+2\alpha_5+\alpha_6$, 
			\item $Z=H^4, H^k+H^{\ell}$ with $k=1, 6$, $\ell=2, 3, 5$. 
		\end{itemize}
		When the Dinkin diagram is symmetrical, we examine them as a group. First,  in the case of $Z=H^4$. the results are shown as in Table\ref{E_6-1}. 
		\begin{table}[H]
            		\begin{center}
            			\begin{tabular}{|c|c|}
            				\hline
            				level & the number of roots $e_i- e_j$ \\ \hline
            				3 & 2  \\
            				2 & 9  \\
            				1 & 18  \\ \hline
            			\end{tabular}
            		\end{center}
			\caption{The number of positive roots by $H^4$ on $E_6$, by level}
			\label{E_6-1}
		\end{table}
		When all cases are calculated, the result is as shown in Table\ref{E_6-multi-1}. 
		\begin{table}[H]
            		\begin{center}
            			\begin{tabular}{|c|c|c|c|}
            				\hline
            				multiplication& $\mathrm{dim}~\fr{g}^3$ & $\mathrm{dim}~\fr{g}^2$ & $\mathrm{dim}~\fr{g}^1$ \\ \hline
            				1 & 2 & 9 & 18 \\ \hline
            				2 & 4 & 18 & 36 \\ \hline				
            			\end{tabular}
            		\end{center}
			\caption{Dimensions of the nilpotent parts by $H^4$, by level}
			\label{E_6-multi-1}
		\end{table}
		Second, in the case of $Z=H^1+H^3$ or $H^5+H^6$. the results are shown as in Table\ref{E_6-2}. 
		\begin{table}[H]
            		\begin{center}
            			\begin{tabular}{|c|c|}
            				\hline
            				level& the number of roots $e_i- e_j$ \\ \hline
            				3 & 5  \\
            				2 & 10  \\
            				1 & 11  \\ \hline
            			\end{tabular}
            		\end{center}
			\caption{The number of positive roots by $H^1+H^3$ or $H^5+H^6$ on $E_6$, by level}
			\label{E_6-2}
		\end{table}
		When all cases are calculated, the result is as shown in Table\ref{E_6-multi-2}. 
		\begin{table}[H]
            		\begin{center}
            			\begin{tabular}{|c|c|c|c|}
            				\hline
            				multiplication& $\mathrm{dim}~\fr{g}^3$ & $\mathrm{dim}~\fr{g}^2$ & $\mathrm{dim}~\fr{g}^1$ \\ \hline
            				1 & 5 & 10 & 11 \\ \hline
            				2 & 10 & 20 & 22 \\ \hline				
            			\end{tabular}
            		\end{center}
			\caption{Dimensions of the nilpotent parts by $H^1+H^3$ or $H^5+H^6$, by level}
			\label{E_6-multi-2}
		\end{table}
		Third, in the case of $Z=H^1+H^2$ or $H^2+H^6$. the results are shown as in Table\ref{E_6-3}. 
		\begin{table}[H]
            		\begin{center}
            			\begin{tabular}{|c|c|}
            				\hline
            				level & the number of roots $e_i- e_j$ \\ \hline
            				3 & 1  \\
            				2 & 10  \\
            				1 & 15  \\ \hline
            			\end{tabular}
            		\end{center}
			\caption{The number of positive roots by $H^1+H^2$ or $H^2+H^6$ on $E_6$, by level}
			\label{E_6-3}
		\end{table}
		When all cases are calculated, the result is as shown in Table\ref{E_6-multi-3}. 
		\begin{table}[H]
            		\begin{center}
            			\begin{tabular}{|c|c|c|c|}
            				\hline
            				multiplication& $\mathrm{dim}~\fr{g}^3$ & $\mathrm{dim}~\fr{g}^2$ & $\mathrm{dim}~\fr{g}^1$ \\ \hline
            				1 & 1 & 10 & 15 \\ \hline
            				2 & 2 & 20 & 30 \\ \hline				
            			\end{tabular}
            		\end{center}
			\caption{Dimensions of the nilpotent parts by $H^1+H^2$ or $H^2+H^6$, by level}
			\label{E_6-multi-3}
		\end{table}
		Finally, in the case of $Z=H^1+H^5$ or $H^3+H^6$. the results are shown as in Table\ref{E_6-4}.
		\begin{table}[H]
            		\begin{center}
            			\begin{tabular}{|c|c|}
            				\hline
            				level & the number of roots $e_i- e_j$ \\ \hline
            				3 & 4  \\
            				2 & 10  \\
            				1 & 16  \\ \hline
            			\end{tabular}
            		\end{center}
			\caption{The number of positive roots by $H^1+H^5$ or $H^3+H^6$, by level}
			\label{E_6-4}
		\end{table}
		When all cases are calculated, the result is as shown in Table\ref{E_6-multi-4}. 
		\begin{table}[H]
            		\begin{center}
            			\begin{tabular}{|c|c|c|c|}
            				\hline
            				multiplication& $\mathrm{dim}~\fr{g}^3$ & $\mathrm{dim}~\fr{g}^2$ & $\mathrm{dim}~\fr{g}^1$ \\ \hline
            				1 & 4 & 10 & 16 \\ \hline
            				2 & 8 & 20 & 32 \\ \hline				
            			\end{tabular}
            		\end{center}
			\caption{Dimensions of the nilpotent parts by $H^1+H^5$ or $H^3+H^6$, by level}
			\label{E_6-multi-4}
		\end{table}
	\subsection{Nilpotent parts of parabolic subalgebras of $E_7$}
		A set of all positive roots, the highest root of $E_7$, characteristic elements where nilpotent parts are of three-step are as follow: 
		\begin{itemize}
			\item a set of all positive roots refers to \cite{K}
			\item the highest root $\tilde{\alpha}=2\alpha_1+2\alpha_2+3\alpha_3+4\alpha_4+3\alpha_5+2\alpha_6+\alpha_7$, 
			\item $Z=H^3, H^5, H^k+H^7$ with $k=1, 2, 6$. 
		\end{itemize}
		First, in the case of $Z=H^3$. the results are shown as in Table\ref{E_7-1}. 
		\begin{table}[H]
            		\begin{center}
            			\begin{tabular}{|c|c|}
            				\hline
            				level & the number of roots $e_i- e_j$ \\ \hline
            				3 & 2  \\
            				2 & 15  \\
            				1 & 30  \\ \hline
            			\end{tabular}
            		\end{center}
			\caption{The number of positive roots by $H^3$, by level}
			\label{E_7-1}
		\end{table}
		When all cases are calculated, the result is as shown in Table\ref{E_7-multi-1}. 
		\begin{table}[H]
            		\begin{center}
            			\begin{tabular}{|c|c|c|c|}
            				\hline
            				multiplication& $\mathrm{dim}~\fr{g}^3$ & $\mathrm{dim}~\fr{g}^2$ & $\mathrm{dim}~\fr{g}^1$ \\ \hline
            				1 & 2 & 15 & 30 \\ \hline
            				2 & 4 & 30 & 60 \\ \hline				
            			\end{tabular}
            		\end{center}
			\caption{Dimensions of the nilpotent parts by $H^3$, by level}
			\label{E_7-multi-1}
		\end{table}
		Second, in the case of $Z=H^5$ the results are shown as in Table\ref{E_7-2}. 
		\begin{table}[H]
            		\begin{center}
            			\begin{tabular}{|c|c|}
            				\hline
            				level & the number of roots $e_i- e_j$ \\ \hline
            				3 & 5  \\
            				2 & 15  \\
            				1 & 30  \\ \hline
            			\end{tabular}
            		\end{center}
			\caption{The number of positive roots by $H^5$, by level}
			\label{E_7-2}
		\end{table}
		When all cases are calculated, the result is as shown in Table\ref{E_7-multi-2}. 
		\begin{table}[H]
            		\begin{center}
            			\begin{tabular}{|c|c|c|c|}
            				\hline
            				multiplication& $\mathrm{dim}~\fr{g}^3$ & $\mathrm{dim}~\fr{g}^2$ & $\mathrm{dim}~\fr{g}^1$ \\ \hline
            				1 & 5 & 15 & 30 \\ \hline
            				2 & 10 & 30 & 60 \\ \hline				
            			\end{tabular}
            		\end{center}
			\caption{Dimensions of the nilpotent parts by $H^5$, by level}
			\label{E_7-multi-2}
		\end{table}
		Third, in the case of $Z=H^1+H^7$ the results are shown as in Table\ref{E_7-3}. 
		\begin{table}[H]
            		\begin{center}
            			\begin{tabular}{|c|c|}
            				\hline
            				level & the number of roots $e_i- e_j$ \\ \hline
            				3 & 1 \\
            				2 & 16  \\
            				1& 26  \\ \hline
            			\end{tabular}
            		\end{center}
			\caption{The number of positive roots by $H^1+H^7$, by level}
			\label{E_7-3}
		\end{table}
		When all cases are calculated, the result is as shown in Table\ref{E_7-multi-3}. 
		\begin{table}[H]
            		\begin{center}
            			\begin{tabular}{|c|c|c|c|}
            				\hline
            				multiplication& $\mathrm{dim}~\fr{g}^3$ & $\mathrm{dim}~\fr{g}^2$ & $\mathrm{dim}~\fr{g}^1$ \\ \hline
            				1 & 1& 16 & 26 \\ \hline
            				2 & 12 & 32 & 52 \\ \hline				
            			\end{tabular}
            		\end{center}
			\caption{Dimensions of the nilpotent parts by $H^1+H^7$, by level}
			\label{E_7-multi-3}
		\end{table}
		Fourth, in the case of $Z=H^2+H^7$ the results are shown as in Table\ref{E_7-4}. 
		\begin{table}[H]
            		\begin{center}
            			\begin{tabular}{|c|c|}
            				\hline
            				level & the number of roots $e_i- e_j$ \\ \hline
            				3 & 6  \\
            				2 & 16  \\
            				1 & 26  \\ \hline
            			\end{tabular}
            		\end{center}
			\caption{The number of positive roots by $H^2+H^7$, by level}
			\label{E_7-4}
		\end{table}
		When all cases are calculated, the result is as shown in Table\ref{E_7-multi-4}. 
		\begin{table}[H]
            		\begin{center}
            			\begin{tabular}{|c|c|c|c|}
            				\hline
            				multiplication& $\mathrm{dim}~\fr{g}^3$ & $\mathrm{dim}~\fr{g}^2$ & $\mathrm{dim}~\fr{g}^1$ \\ \hline
            				1 & 6 & 16 & 26 \\ \hline
            				2 & 12 & 32 & 52 \\ \hline				
            			\end{tabular}
            		\end{center}
			\caption{Dimensions of the nilpotent parts by $H^2+H^7$, by level}
			\label{E_7-multi-4}
		\end{table}
		Finally, in the case of $Z=H^6+H^7$ the results are shown as in Table\ref{E_7-5}. 
		\begin{table}[H]
            		\begin{center}
            			\begin{tabular}{|c|c|}
            				\hline
            				level & the number of roots $e_i- e_j$ \\ \hline
            				3 & 10  \\
            				2 & 16  \\
            				1 & 17  \\ \hline
            			\end{tabular}
            		\end{center}
			\caption{The number of positive roots by $H^6+H^7$, by level}
			\label{E_7-5}
		\end{table}
		When all cases are calculated, the result is as shown in Table\ref{E_7-multi-5}.
		 \begin{table}[H]
            		\begin{center}
            			\begin{tabular}{|c|c|c|c|}
            				\hline
            				multiplication& $\mathrm{dim}~\fr{g}^3$ & $\mathrm{dim}~\fr{g}^2$ & $\mathrm{dim}~\fr{g}^1$ \\ \hline
            				1 & 10 & 16 & 17 \\ \hline
            				2 & 20 & 32 & 34 \\ \hline				
            			\end{tabular}
            		\end{center}
			\caption{Dimensions of the nilpotent parts by $H^6+H^7$, by level}
			\label{E_7-multi-5}
		\end{table}
	\subsection{Nilpotent parts of parabolic subalgebras of $E_8$}
		A set of all positive roots, the highest root of $E_8$, characteristic elements where nilpotent parts are of three-step are as follow: 
		\begin{itemize}
			\item a set of all positive roots refers to \cite{K}
			\item the highest root $\tilde{\alpha}=2\alpha_1+3\alpha_2+4\alpha_3+6\alpha_4+5\alpha_5+4\alpha_6+3\alpha_7+2\alpha_8$. 
			\item $Z=H^2, H^7$. 
		\end{itemize}
		First, in the case of $Z=H^2$. the results are shown as in Table\ref{E_8-1}.
		\begin{table}[H]
            		\begin{center}
            			\begin{tabular}{|c|c|}
            				\hline
            				level & the number of roots $e_i- e_j$ \\ \hline
            				3 & 8  \\
            				2 & 29  \\
            				1 & 56  \\ \hline
            			\end{tabular}
            		\end{center}
			\caption{The number of positive roots by $H^2$, by level}
			\label{E_8-1}
		\end{table}
		When all cases are calculated, the result is as shown in Table\ref{E_8-multi-1}.
		\begin{table}[H]
            		\begin{center}
            			\begin{tabular}{|c|c|c|c|}
            				\hline
            				multiplication& $\mathrm{dim}~\fr{g}^3$ & $\mathrm{dim}~\fr{g}^2$ & $\mathrm{dim}~\fr{g}^1$ \\ \hline
            				1 & 8 & 29 & 56 \\ \hline
            				2 & 16 & 58 & 112 \\ \hline				
            			\end{tabular}
            		\end{center}
			\caption{Dimensions of the nilpotent parts by $H^2$, by level}
			\label{E_8-multi-1}
		\end{table}
		Next, in the case of $Z=H^7$. the results are shown as in Table\ref{E_8-2}.
		\begin{table}[H]
            		\begin{center}
            			\begin{tabular}{|c|c|}
            				\hline
            				level & the number of roots $e_i- e_j$ \\ \hline
            				3 & 2  \\
            				2 & 26  \\
            				1 & 55  \\ \hline
            			\end{tabular}
            		\end{center}
			\caption{The number of positive roots by $H^7$, by level}
			\label{E_8-2}
		\end{table}
		When all cases are calculated, the result is as shown in Table\ref{E_8-multi-2}.
		\begin{table}[H]
            		\begin{center}
            			\begin{tabular}{|c|c|c|c|}
            				\hline
            				multiplication& $\mathrm{dim}~\fr{g}^3$ & $\mathrm{dim}~\fr{g}^2$ & $\mathrm{dim}~\fr{g}^1$ \\ \hline
            				1 & 2 & 26 & 55 \\ \hline
            				2 & 4 & 52 & 110 \\ \hline				
            			\end{tabular}
            		\end{center}
			\caption{Dimensions of the nilpotent parts by $H^7$, by level}
			\label{E_8-multi-2}
		\end{table}
	\subsection{Nilpotent parts of parabolic subalgebras of $F_4$}
		A set of all positive roots, the highest root of $F_4$, characteristic elements where nilpotent parts are of three-step are as follow: 
		\begin{itemize}
			\item a set of all positive roots refers to \cite{K}
			\item the highest root $\tilde{\alpha}=2\alpha_1+4\alpha_2+3\alpha_3+2\alpha_4$, 
			\item $Z=H^3$. 
		\end{itemize}
		If we count the positive roots of $Z$ by level, the results are shown as in Table\ref{F_4}. 
		\begin{table}[H]
            		\begin{center}
            			\begin{tabular}{|c|c|c|c|}
            				\hline
            				level & the number of long roots & the number of short roots \\ \hline
            				3 & 2 & 0  \\
            				2 & 2 & 3  \\
            				1 & 5 & 6  \\ \hline
            			\end{tabular}
            		\end{center}
			\caption{The number of positive roots by $H^3$, by level}
			\label{F_4}
		\end{table}
		When all cases are calculated, the result is as shown in Table\ref{F_4-multi}.
		\begin{table}[H]
            		\begin{center}
            			\begin{tabular}{|c|c|c|c|c|}
            				\hline
            				\multicolumn{2}{|c|}{multiplication} & \multirow{2}{*}{$\mathrm{dim}~\fr{g}^3$} & \multirow{2}{*}{$\mathrm{dim}~\fr{g}^2$} & \multirow{2}{*}{$\mathrm{dim}~\fr{g}^1$} \\
            				\cline{1-2}
            				long & short &  &  &  \\ \hline
            				1 & 1 & 2 & 5 & 11 \\ \hline
            				2 & 2 & 4 & 10 & 22  \\ \hline
            				1 & 2 & 2 & 8 & 17 \\ \hline
            				1 & 4 & 2 & 14 & 29 \\ \hline
            				1 & 8 & 2 & 26 & 53 \\ \hline
            			\end{tabular}
            		\end{center}
			\caption{Dimensions of the nilpotent parts by $H^3$, by level}
			\label{F_4-multi}
		\end{table}
	\subsection{Nilpotent parts of parabolic subalgebras of $G_2$}
		A set of all positive roots, the highest root of $G_2$, characteristic elements where nilpotent parts are of three-step are as follow: 
		\begin{itemize}
			\item a set of all positive roots refers to \cite{K}
			\item the highest root $\tilde{\alpha}=3\alpha_1+2\alpha_2$, 
			\item $Z=H^1$. 
		\end{itemize}
		If we count the positive roots of $Z$ by level, the results are shown as in Table\ref{G_2}. 
		\begin{table}[H]
            		\begin{center}
            			\begin{tabular}{|c|c|c|}
            				\hline
            				level & the number of short roots & the number of long roots \\ \hline
            				3 & 0 & 2  \\
            				2 & 1 & 0 \\
            				1 & 2 & 1\\ \hline
            			\end{tabular}
            		\end{center}
			\caption{The number of positive roots by $H^1$, by level}
			\label{G_2}
		\end{table}
		When all cases are calculated, the result is as shown in Table\ref{G_2-multi}.
		\begin{table}[H]
            		\begin{center}
            			\begin{tabular}{|c|c|c|c|c|}
            				\hline
            				\multicolumn{2}{|c|}{multiplication} & \multirow{2}{*}{$\mathrm{dim}~\fr{g}^3$} & \multirow{2}{*}{$\mathrm{dim}~\fr{g}^2$} & \multirow{2}{*}{$\mathrm{dim}~\fr{g}^1$} \\
            				\cline{1-2}
            				long & short &  &  &  \\ \hline
            				1 & 1 & 2 & 1 & 2 \\ \hline
            				2 & 2 & 4 & 2 & 4  \\ \hline
            			\end{tabular}
            		\end{center}
			\caption{Dimensions of the nilpotent parts by $H^1$, by level}
			\label{G_2-multi}
		\end{table}
	\subsection{Nilpotent parts of parabolic subalgebras of $BC_n$}
		A set of all positive roots and the highest root of $BC_n$ are as follow: 
		\begin{itemize}
			\item a set of all positive roots $\Delta^+=\{e_i\pm e_j\mid 1\leq i <j\leq n\}\cup\{e_i \mid1\leq i\leq n\}\cup\{2e_i\mid 1\leq i\leq n\}$, 
			\item $\alpha_i=e_i-e_{i+1}$ for $1\leq i\leq n-1$, $\alpha_n=e_n$
			\item the highest root $\tilde{\alpha}=2\alpha_1+\cdots+2\alpha_n$. 
		\end{itemize}
		Therefore, there do not exist characteristic elements $Z$ where the nilpotent part by $Z$ is of three-step. 
	\subsection{low-dimensional nilpotent parts}
	\begin{Thm}
		Nilpotent Lie algebras as obtained nilpotent parts which is dimension $\leq 10$ of parabolic subalgebras are as shown in Table \ref{parabolic table}: 
	\end{Thm}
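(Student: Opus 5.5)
The plan is to make the classification of Section~\ref{parabolic} exhaustive and then filter by dimension. By Proposition~\ref{nil}, the nilpotent part defined by $Z=\sum H^{i_j}$ is $\fr{g}^1+\fr{g}^2+\fr{g}^3$ with $\fr{g}^k=\bigoplus_{\alpha(Z)=k}\fr{g}_\alpha$. It is three-step exactly when $\tilde{\alpha}(Z)=3$, where $\tilde\alpha$ is the highest root of the relevant simple factor. So I first justify that the lists of characteristic elements in each subsection are complete. Writing $\tilde{\alpha}=\sum c_i\alpha_i$, the condition $\tilde\alpha(Z)=\sum_{j}c_{i_j}=3$ is a small subset-sum problem on the coefficients of the highest root, and I read off the solutions type by type. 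In $A_n$ every $c_i=1$, so $Z$ must have three summands. In $B_n$ the coefficients are $(1,2,\dots,2)$, giving $H^1+H^k$. The other types follow the same way, and for $BC_n$ all coefficients equal $2$, so no solution exists. I would also note that for a semisimple $\fr{g}=\fr{g}_1\oplus\fr{g}_2$ the nilpotent part is the direct sum of the nilpotent parts of the factors. Hence reducible cases (a three-step factor plus lower-step factors) must also be included in the table, or the statement read for simple $\fr{g}$.

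Next, I use the dimension tables (Tables~\ref{A_n-multi}--\ref{G_2-multi}) to find all parameters with $\dim\fr{g}^1+\dim\fr{g}^2+\dim\fr{g}^3\le 10$. Each level dimension is at least $1$ and grows with $n$ and the multiplicities, so only finitely many cases survive. In $A_n$ the total is $k_1(n-k_3+1)+\cdots$. Since the level-1 count already includes $(k_2-k_1)(k_3-k_2+k_1)$, this forces $n\le 4$ or so with multiplicity $1$, plus a few multiplicity-$2$ cases. In $B_n$, $C_n$ and $D_n$ small $n$ and $k$ survive, for example $B_2$ with $Z=H^1+H^2$ in the split case. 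For $G_2$ split the total is $5$, and for $G_2$ complex it is $10$. All $E$ and $F_4$ cases exceed $10$ and are discarded. I would tabulate the survivors with their real forms, using \cite{T1} for which multiplicity patterns actually occur.

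Then I identify each surviving Lie algebra explicitly. I take a Chevalley-type basis of root vectors $X_\alpha$ (tensored with the multiplicity spaces) and record the brackets $[X_\alpha,X_\beta]=N_{\alpha,\beta}X_{\alpha+\beta}$ for $\alpha+\beta$ a root. I then rescale to put the structure constants in normal form and remove coincidences: different $(\fr{g},Z)$ can give isomorphic algebras, for instance through Dynkin-diagram symmetries or low-rank isomorphisms such as $B_2\cong C_2$ and $D_3\cong A_3$. Distinct entries are separated by invariants: the dimensions of the lower central series, of the center, and of derived algebras.

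The main obstacle is the last step. Establishing non-isomorphism, and recognising the algebras in a standard notation, can be subtle when level dimensions coincide, as between an $A_n$ case with multiplicity $2$ and a $C_n$ case. The second subtlety is the completeness of the real-form multiplicity data. For the first I would try the invariants above, adding the rank of the bracket map $\fr{g}^1\wedge\fr{g}^1\to\fr{g}^2$ when they fail; for the second I would rely on \cite{T1}.
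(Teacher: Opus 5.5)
For the enumeration itself, your first two steps are the paper's proof. The paper filters the level-dimension tables of Section~\ref{parabolic} by $\dim\le 10$, does the $C_n$, multiplicity $(1,\dots,1)$ case explicitly via $n(n+1)\le 10$, and calls the other types analogous. It takes the lists of characteristic elements as given, so your subset-sum check on the coefficients of $\tilde\alpha$ is a useful addition. Your remark that the table tacitly assumes $\mathfrak{g}$ simple is also correct. One slip: type $A_n$ contributes no multiplicity-$2$ entries, since the smallest case ($A_3$, $Z=H^1+H^2+H^3$) already has dimension $6$, hence $12$ when doubled.

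Your third step goes beyond anything the paper proves, and as planned it would fail.

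\begin{itemize}
\item Every invariant you list is unchanged under complexification: dimensions of the lower central and derived series, of the center, and the rank of $\Lambda^2\mathfrak{g}^1\to\mathfrak{g}^2$.
\item Yet several entries of Table~\ref{parabolic table} with equal level dimensions are real forms of one complex Lie algebra. They are nilradicals of the stabilizer of an isotropic flag $V_1\subset V_2$ in $\mathfrak{so}(p,q)$:
\begin{itemize}
\item dimension $6$: $\mathfrak{so}(3,3)\cong\mathfrak{sl}(4,\mathbb{R})$ versus $\mathfrak{so}(2,4)\cong\mathfrak{su}(2,2)$;
\item dimension $8$: $\mathfrak{so}(2,5)$ ($B_2$, $(1,3)$) versus $\mathfrak{so}(3,4)$ ($B_3$, $H^1+H^2$);
\item dimension $10$: $\mathfrak{so}(2,6)$, $\mathfrak{so}(3,5)$ and $\mathfrak{so}(4,4)$.
\end{itemize}
\item Each of these has $\mathfrak{g}^1=\mathbb{R}x\oplus W$, $\mathfrak{g}^2=[x,W]$, $\mathfrak{g}^3=\mathbb{R}z$, $[W,W]=0$ and $[w_1,[x,w_2]]=B(w_1,w_2)z$. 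Here $B$ is the form induced on $V_2^\perp/V_2$, of signature $(p-2,q-2)$.
\end{itemize}

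The missing idea is a genuinely real invariant. $W$ is intrinsic: it is the set of $u\in\mathfrak{n}/[\mathfrak{n},\mathfrak{n}]$ for which $[u,\cdot\,]$ into $[\mathfrak{n},\mathfrak{n}]/[\mathfrak{n},[\mathfrak{n},\mathfrak{n}]]$ has rank $\le 1$. $B$ is then recovered up to a nonzero scalar as $[w_1,[u,w_2]]$ for any $u\notin W$. So the signature of $B$ up to sign separates these algebras. In dimension $6$ this is the real pair $\epsilon=\pm1$ in $[x_1,x_2]=x_4$, $[x_1,x_3]=x_5$, $[x_2,x_4]=x_6$, $[x_3,x_5]=\epsilon x_6$.

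Your invariants are too coarse even apart from this. They do not separate $B_2$ with $(2,2)$ from $B_3$ with $H^1+H^3$, both with level dimensions $(2,2,4)$. These do differ: the former is a complex Lie algebra, so every $\operatorname{ad}_u$ has even real rank, while in the latter $\operatorname{ad}_{X_{e_3}}$ has rank $3$.
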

	\begin{table}[H]
		\begin{center}
			\begin{tabular}{|c|c|c|c|c|c|c|}
				\hline
				$\mathrm{dim}~\fr{g}$ & type & multiplicities & $Z$ & $\mathrm{dim}~\fr{g}^3$ & $\mathrm{dim}~\fr{g}^2$ & $\mathrm{dim}~\fr{g}^1$ \\ \hline
				4 & $B_2(C_2)$ & $(1, 1)$ & $H^1+H^2$ & 1 & 1 & 2 \\ \hline
				5 & $G_2$ & $(1, 1)$ & $H^1$ & 2 & 1 & 2 \\ \hline
				\multirow{2}{*}{6} & $A_3(D_3)$ & $(1, 1, 1)$ & $H^1+H^2+H^3$ & 1 & 2 & 3 \\ 
				\cline{2-7}
				 & $B_2(C_2)$ & $(1, 2)((2, 1))$ & $H^1+H^2$ & 1 & 2 & 3 \\ \hline
				\multirow{6}{*}{8} & $B_2(C_2)$ & $(2, 2)$ & $H^1+H^2$ & 2 & 2 & 4 \\
				\cline{2-7}
				 & $B_2$ & $(1, 3)$ & $H^1+H^2$ & 1 & 3 & 4 \\
				\cline{2-7}
				 & $B_3$ & $(1, 1, 1)$ & $H^1+H^2$ & 1 & 3 & 4 \\
				\cline{2-7}
				 & $C_3$ & $(1, 1, 1)$ & $H^1+H^3$ & 1 & 2 & 5 \\
				\cline{2-7}
				 & $C_3$ & $(1, 1, 1)$ & $H^2+H^3$ & 3 & 2 & 3 \\ 
				\cline{2-7}
				 & $B_3$ & $(1, 1, 1)$ & $H^1+H^3$ & 2 & 2 & 4 \\ \hline
				\multirow{4}{*}{9} & \multirow{2}{*}{$A_4$} & \multirow{2}{*}{$(1, 1, 1, 1)$} & $H^1+H^2+H^3$ & \multirow{2}{*}{2} & \multirow{2}{*}{3} & \multirow{2}{*}{4} \\
				 & & & ($H^2+H^3+H^4$) &  &  &  \\
				\cline{2-7}
				 & \multirow{2}{*}{$A_4$} &\multirow{2}{*}{$(1, 1, 1, 1)$} & $H^1+H^2+H^4$ & \multirow{2}{*}{1} & \multirow{2}{*}{3} & \multirow{2}{*}{5} \\
				 & & & ($H^1+H^3+H^4$) & & & \\ \hline
				\multirow{5}{*}{10} & $B_2(C_2)$ & $(1, 4) ((4, 1))$ & $H^1+H^2$ & 1 & 4 & 5 \\
				\cline{2-7}
				 & $B_3$ & $(1, 1, 2)$ & $H^1+H^2$ & 1 & 4 & 5 \\
				\cline{2-7}
				 & \multirow{2}{*}{$D_4$} & \multirow{2}{*}{$(1, 1, 1, 1)$} & $H^1+H^2$ ($H^2+H^3$) & \multirow{2}{*}{1} & \multirow{2}{*}{4} & \multirow{2}{*}{5} \\
				 &  & & $H^2+H^3$, $H^2+H^4$ & & &  \\
				\cline{2-7}
				 & $G_2$ & $(2, 2)$ & $H^1$ & 4 & 2 & 4 \\ \hline
 			\end{tabular}
		\end{center}
		\caption{Nilpotent parts parabolic subalgebras which are $\leq\mathrm{dim} 10$}
		 \label{parabolic table}
	\end{table}
	\begin{proof}
		We carry out the computation for each root system separately. Since the arguments are the same in all cases, we present only the case of type $C_n$. Consider the three-step nilpotent parts $\fr{g}$ of parabolic subalgebras of $C_n$-type semisimple Lie algebras with multiplicities $(1, 1, \ldots, 1)$. From Table\ref{C_n-multi}, it satisfies the following: 
		\begin{itemize}
			\item $\mathrm{dim}~\fr{g}^3=(1/2)k(k+1)$, 
			\item $\mathrm{dim}~\fr{g}^2=k(n-k)$, 
			\item $\mathrm{dim}~\fr{g}^1=(1/2)(n-k)(n+k+1)$
		\end{itemize}
		where $1\leq k\leq n-1$. Consider $\mathrm{dim}~\fr{g}\leq10$. 
		\begin{align*}
			\mathrm{dim}~\fr{g}&=\mathrm{dim}~\fr{g}^3+\mathrm{dim}~\fr{g}^2+\mathrm{dim}~\fr{g}^1\\
			&=\frac{1}{2}(n(n+1)+k(n-k)). 
		\end{align*}
		From $\mathrm{dim}~\fr{g}\leq10$ and $k(n-k)\geq1$, 
		\begin{equation}\label{eq}
			n(n+1)\leq10. 
		\end{equation}
		The only integers $n$ that satisfy Inequality \ref{eq} are 2 and 3. In the case of $n=2$, an integer $k$ is only 1. In the case of $(k, n)=(1, 2)$, it satisfies $\mathrm{dim}~\fr{g}=4$. Moreover, $\mathrm{dim}~\fr{g}^3=1$, $\mathrm{dim}~\fr{g}^2=1$ and $\mathrm{dim}~\fr{g}^1=2$. Also, in the case of $n=3$, an integer $k$ is only 2 or 3. In the case of $(k, n)=(1, 3), (2, 3)$, both are $\mathrm{dim}~\fr{g}=8$. Moreover, $\mathrm{dim}~\fr{g}^3=1, 3$, $\mathrm{dim}~\fr{g}^2=2, 2$ and $\mathrm{dim}~\fr{g}^1=5, 3$, respectively. Applying the same procedure to the remaining cases yields the complete Table\ref{parabolic table}

	\end{proof}
\section{Three-step nilpotent Lie algebras obtained by quivers}\label{quiver}
	In this section, we introduce nilpotent Lie algebras obtained by quivers. A quiver is a directed graph where loops and multiple arrows between two vertices are allowed. This method is a way to obtained higher-step nilpotent Lie algebras. 
	\begin{Def}
		A {\it quiver} $Q=(V, E, s, t)$ is a quadruple consisting of two sets $V$, $E$, and two maps $s, t: E\to V$. Elements of $V$ and $E$ are called {\it vertices} and {\it arrows}, respectively. For an arrow $\alpha\in E$, the vertices $s(\alpha)$ and $t(\alpha)$ are called the {\it source} and the {\it target} of $\alpha$, respectively.
	\end{Def}
	Quivers consist of vertices and arrows. Note that the notion of paths is also important, which is defined as follows. 
	\begin{Def}
		Let $Q=(V, E, s, t)$ be a quiver. A {\it path of length $m$} is a sequence $\alpha_1\alpha_2\cdots \alpha_m$, where $\alpha_1, \ldots, \alpha_m\in E$ and they satisfy $t(\alpha_i)=s(\alpha_{i+1})$ for $i=1,\ldots, m-1$.
	\end{Def}
	\begin{Def}[\cite{MT}]
		Let $Q$ be a quiver and $\Path(Q)$ be a set of all paths in $Q$. The path algebra $\fr{n}_Q$ is defined as an $\mathbb{R}$-vector space having $\Path(Q)$ as basis with the following product: for $x$ and $y\in\Path(Q)$, 
		\[
			x\cdot y=
			\begin{cases}
				xy & (xy\in\Path(Q)),  \\
				0 & (\text{otherwise}). 
			\end{cases}
		\]
		The space $\fr{n}_Q$ equipped the Lie bracket $[x, y]=x\cdot y-y\cdot x$ is called the Lie algebra obtained by $Q$. 
	\end{Def}
	If a quiver has cycles, the obtained Lie algebra is infinite dimensional, where a cycle is a paths whose source and target coincide. For a finite quiver without cycles, there exists the maximum of length of paths which is called the length of a quiver. Then, the following holds: 
	\begin{Prop}[\cite{MT}]
		Let $Q$ be a quiver of length $m$. Then the obtained Lie algebra $\fr{n}_Q$ is an $m$-step nilpotent Lie algebra. 
	\end{Prop}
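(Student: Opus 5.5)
The plan is to grade $\fr{n}_Q$ by path length. I will use this grading to show the lower central series stops after $m$ steps, and then exhibit a nonzero element in the $m$-th term to show it does not stop earlier. Throughout, $Q$ is finite and has no cycles, so $\Path(Q)$ is finite. Also, as in the definition, paths have length $\geq 1$, and every path has length at most $m$.

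\textbf{Step 1 (Lie algebra structure).} Concatenation of paths is associative whenever it is defined. If one of $xy$ or $yz$ is undefined, then both bracketings of $x\cdot y\cdot z$ are $0$. Hence the product on $\fr{n}_Q$ is associative. It follows that the commutator $[x,y]=x\cdot y-y\cdot x$ satisfies the Jacobi identity, so $\fr{n}_Q$ is a Lie algebra.

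\textbf{Step 2 (grading and upper bound).} Let $V_k$ be the span of the paths of length $k$, so that $\fr{n}_Q=\bigoplus_{k=1}^m V_k$. For paths $x\in V_a$ and $y\in V_b$, the products $x\cdot y$ and $y\cdot x$ each lie in $V_{a+b}$ or vanish. Hence $[V_a,V_b]\subset V_{a+b}$, and $V_{a+b}=0$ when $a+b>m$. Write $C^1=\fr{n}_Q$ and $C^{j+1}=[\fr{n}_Q,C^j]$ for the lower central series. Induction on $j$ gives $C^j\subset\bigoplus_{k\geq j}V_k$. In particular $C^{m+1}=0$, so $\fr{n}_Q$ is nilpotent of step at most $m$.

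\textbf{Step 3 (lower bound).} Choose a path $\alpha_1\alpha_2\cdots\alpha_m$ of maximal length $m$. I will show by descending induction on $j$ that $\alpha_j\cdots\alpha_m\in C^{m-j+1}$.

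\begin{enumerate}
\item The base case $j=m$ is trivial, since $\alpha_m\in C^1$.
\item For the inductive step, compute
\[
[\alpha_j,\alpha_{j+1}\cdots\alpha_m]=\alpha_j\alpha_{j+1}\cdots\alpha_m-(\alpha_{j+1}\cdots\alpha_m)\cdot\alpha_j .
\]
The second term is nonzero only if $t(\alpha_m)=s(\alpha_j)$. In that case $\alpha_j\cdots\alpha_m$ would be a cycle, which is excluded, so the second term vanishes.
\item Therefore $\alpha_j\cdots\alpha_m=[\alpha_j,\alpha_{j+1}\cdots\alpha_m]\in[\fr{n}_Q,C^{m-j}]=C^{m-j+1}$.
\end{enumerate}

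Taking $j=1$ gives $0\neq\alpha_1\cdots\alpha_m\in C^m$. Combined with Step 2, $\fr{n}_Q$ is exactly $m$-step nilpotent.

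The only delicate point is the vanishing of the reversed product in Step 3. It is handled by the no-cycle hypothesis, which is implicit in $Q$ having a finite length. Everything else is routine bookkeeping with the length grading.
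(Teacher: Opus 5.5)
The paper does not prove this proposition; it quotes it from \cite{MT}, so there is no in-paper argument to compare yours against. Your proof is correct and complete. Step 1 checks associativity of concatenation, which justifies the Jacobi identity. The length grading $[V_a,V_b]\subset V_{a+b}$ gives $C^j\subset\bigoplus_{k\geq j}V_k$, hence $C^{m+1}=0$. For the lower bound, the no-cycle hypothesis (built into the notion of length) is exactly what kills the reversed product $(\alpha_{j+1}\cdots\alpha_m)\cdot\alpha_j$. A maximal path is therefore the iterated bracket $[\alpha_1,[\alpha_2,[\cdots,[\alpha_{m-1},\alpha_m]\cdots]]]$, which is a nonzero element of $C^m$.

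Your Step 3 also works verbatim for a path of any length $k$, so you get $V_k\subset C^k$ and hence $C^j=\bigoplus_{k\geq j}V_k$ for every $j$. In other words, $\fr{n}_Q$ is generated by the span of the arrows, and the length grading is its lower central series filtration. This comes at no extra cost and is worth recording.
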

	In this paper, we focus on three-step nilpotent Lie algebras obtained by quivers. The Lie algebra obtained by a connected quiver is the direct sum of the Lie algebras obtained by each connected component. Therefore, we consider connected quiver of length 3. A quiver of length 3 requires at least three arrows. Let $Q_0=(V_0, E_0, s_0, t_0)$ be the following quiver. 
	\begin{figure}[H]
		\includegraphics[scale=0.7]{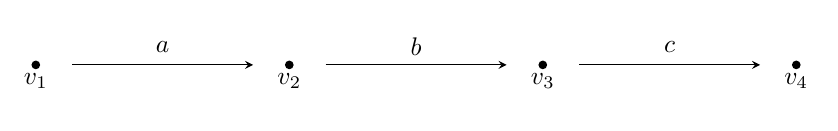}
		\caption{Quiver $Q_0$}
		\label{Q_0}
	\end{figure}
	Let $Quiver^3$ be a set of all connected quivers $Q$ of length 3 satisfying that there do not exist a non-connected quiver $Q'$ such that $\fr{n}_Q\simeq\fr{n}_{Q'}$. For $Q\in Quiver^3$, we assume without loss of generality that $Q$ contains  $Q_0$ as a subquiver. 
	\begin{Lem}\label{properties}
		Let $Q=(V, E, s, t)\in Quiver^3$. Then, the following properties hold. 
		\begin{enumerate}
			\item (loops) $\nexists e\in E$ such that $s(e)=t(e)=v_i$ \quad ($i=1, 2, 3, 4$). 
			\item (cycles) $\nexists e\in E$ such that $s(e)=v_i$, $t(e)=v_j$ and $j<i$. 
			\item (4-step) 
				\begin{enumerate}[(a)]
					\item $\nexists e\in E$ such that $t(e)=v_1$ and $s(e)\in V\setminus V_0$. 
					\item $\nexists e\in E$ such that $s(e)=v_4$ and $t(e)\in V\setminus V_0$. 
				\end{enumerate}
			\item (non-connected) $\nexists e\in E$ such that $s(e)=v_1$ and $t(e)=v_4$. 
		\end{enumerate}
	\end{Lem}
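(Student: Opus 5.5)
We do not see the figure, so we read $Q_0$ as the path quiver $v_1\xrightarrow{a_1}v_2\xrightarrow{a_2}v_3\xrightarrow{a_3}v_4$ with $V_0=\{v_1,v_2,v_3,v_4\}$, whose unique path of length $3$ is $a_1a_2a_3$. The plan is to derive each item from two facts about a quiver $Q=(V,E,s,t)\in Quiver^3$. First, $Q$ is finite and has no cycles, since otherwise $\fr{n}_Q$ is infinite dimensional; this is the standing assumption behind the Proposition from \cite{MT}. Second, $Q$ has length exactly $3$, so by that Proposition $\fr{n}_Q$ is three-step and no path of length $\geq4$ exists. Item (4) additionally uses the defining condition of $Quiver^3$, namely that $\fr{n}_Q$ is not isomorphic to $\fr{n}_{Q'}$ for any non-connected $Q'$.

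\emph{Items (1) and (2).} A loop $e$ at $v_i$ is a cycle of length one, and the words $e^k$ are paths of every length, which contradicts length $3$ (and finiteness). Suppose instead $e$ has $s(e)=v_i$ and $t(e)=v_j$ with $j<i$. The arrows $a_j\cdots a_{i-1}$ of $Q_0$ form a path from $v_j$ to $v_i$, and composing it with $e$ gives a closed path, i.e. a cycle. Repeating this closed path yields arbitrarily long paths, again a contradiction.

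\emph{Item (3).} If $e$ has $t(e)=v_1$ and $s(e)=w\notin V_0$, then $e\,a_1a_2a_3$ is a path of length $4$, contradicting length $3$. Dually, if $s(e)=v_4$ and $t(e)\notin V_0$, then $a_1a_2a_3\,e$ has length $4$. The hypothesis $s(e)\notin V_0$ (respectively $t(e)\notin V_0$) only serves to separate these arrows from those already excluded by (1) and (2). The argument in fact works for any endpoint, since an arrow into $v_1$ from some $v_i$ is already ruled out by (2).

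\emph{Item (4).} This is the main obstacle, since it is not a path-length statement but a statement about isomorphism classes. Let $e$ be an arrow $v_1\to v_4$, and let $z=a_1a_2a_3$. We want to show that $\fr{n}_Q\simeq\fr{n}_{Q'}$ for a non-connected $Q'$, contradicting $Q\in Quiver^3$. Observe that $e$ lies in no path of length $\geq2$: a longer path through $e$ would need an arrow into $v_1$ or out of $v_4$, and combined with $Q_0$ this would give length $\geq4$ (by item (3) together with (2)). Hence $e$ brackets trivially with every basis path except possibly ones that compose with it, so $e$ is central. The key trick is the change of basis $e\mapsto e'=e-\lambda z$, or more simply regarding $e$ as spanning an abelian direct summand. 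We need a complement $\fr{n}'$ with $\fr{n}_Q=\fr{n}'\oplus\mathbb{R}e$ as Lie algebras. Since $e$ is central and $e\notin[\fr{n}_Q,\fr{n}_Q]$ (it is an arrow, and brackets of paths are spanned by paths of length $\geq2$), we take $\fr{n}'$ to be the span of all remaining paths. This is a subalgebra, because no bracket produces $e$, and it commutes with $e$. Then $\fr{n}'=\fr{n}_{Q\setminus e}$, where $Q\setminus e$ is $Q$ with the arrow $e$ removed. Therefore $\fr{n}_Q\simeq\fr{n}_{Q\setminus e}\oplus\mathbb{R}\simeq\fr{n}_{Q'}$, where $Q'$ is $Q\setminus e$ together with a disjoint component $\bullet\to\bullet$, which is non-connected. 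The delicate point to check is that $Q\setminus e$ is still a quiver, possibly disconnected, whose path Lie algebra is exactly $\fr{n}'$. This follows because the paths of $Q$ not equal to $e$ are precisely the paths of $Q\setminus e$, since $e$ occurs in no longer path.
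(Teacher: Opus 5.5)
Your proof is correct and follows the paper's argument essentially step for step. Loops and backward arrows create cycles, an arrow into $v_1$ or out of $v_4$ extends $a_1a_2a_3$ to a path of length $4$, and for (4) the arrow $v_1\to v_4$ is central and not a product of paths, so it splits off as the Lie algebra of a disjoint arrow $\bullet\to\bullet$.

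Two of your additions are slightly more careful than the paper: your observation that the argument of (3) works for any source or target, and your explicit check that the remaining paths are exactly $\Path(Q\setminus e)$. The aside about the change of basis $e\mapsto e-\lambda z$ is unnecessary and can be dropped.
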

	\begin{proof}
		Assume that there exists $e\in E$ such that $s(e)=t(e)=v_i$ for $i=1, 2, 3, 4$. Then, since $e$ is a loop, $Q$ is not of length 3, which is a contradiction. 
		
		Assume that there exists $e\in E$ such that $s(e)=v_2$ and $t(e)=v_1$. A path $ae$ is a cycle, it satisfies $s(ae)=t(ae)=v_1$. This is a contradiction. The same applies in other cases as well. 
		
		Assume that there exists $e\in E$ such that $t(e)=v_1$ and $s(e)\in V\setminus V_0$.  Since $t(e)=s(a)=v_1$ holds, $eabc$ is a path of length 4. Therefore, since this is contradictory, (a) holds. Similarly, (b) is true. 
		
		Assume that there exists $e\in E$ such that $s(e)=v_1$ and $t(e)=v_4$. From (3), there do not exist $e'\in E$ such that $t(e')=v_1$ or $s(e')=v_4$. Therefore, for any $x\in\Path(Q)$, it satisfies $[e, x]=0$. Let $Q'=(Q\setminus\{e\})\oplus (\{w_1, w_2\}, \{e\}, s'(e)=w_1, t'(e)=w_2)$ be a non-connected quiver. It satisfies $\fr{n}_Q\simeq\fr{n}_{Q'}$. This is a contradiction. 
 	\end{proof}
	\begin{Lem}\label{+2}
		Let $Q=(V, E, s, t)$ be a finite connected quiver without cycles, $e_0\not\in E$, and $\tilde{Q}=(\tilde{V}, E\cup\{e_0\}, \tilde{s}, \tilde{t})$ be a connected quiver that satisfies $V\subset \tilde{V}$, $\tilde{s}|_E=s$, $\tilde{t}|_E=t$ and there do not exist a non-connected quiver $Q'$ such that $\fr{n}_{\tilde{Q}}\simeq\fr{n}_{Q'}$. Then, it satisfies $\#(\Path(\tilde{Q})\setminus\Path(Q))\geq 2$, that is $\mathrm{dim}~\fr{n}_{\tilde{Q}}-\mathrm{dim}~\fr{n}_Q\geq 2$. 
	\end{Lem}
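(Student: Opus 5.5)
Since $\tilde{Q}$ is obtained from $Q$ by adding the single arrow $e_0$, every path of $\tilde{Q}$ either avoids $e_0$ and lies in $\Path(Q)$, or passes through $e_0$. Hence $\Path(\tilde{Q})\setminus\Path(Q)$ is exactly the set of paths containing $e_0$. This set always contains the length-one path $e_0$ itself. The plan is therefore to find at least one further new path, by showing that $e_0$ can be composed with some arrow of $E$. Concretely, I will show there is an arrow $e\in E$ with either $t(e)=\tilde{s}(e_0)$ or $\tilde{t}(e_0)=s(e)$. Then $ee_0$ or $e_0e$ is a path in $\tilde{Q}$ that does not belong to $\Path(Q)$, giving $\#(\Path(\tilde{Q})\setminus\Path(Q))\geq 2$. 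The dimension statement follows at once, because $\Path(Q)\subset\Path(\tilde{Q})$ and both sets are bases.

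I argue by contradiction: suppose no arrow of $E$ has target $\tilde{s}(e_0)$, and no arrow of $E$ has source $\tilde{t}(e_0)$. The paths through $e_0$ are of the form $pe_0q$ with $t(p)=\tilde{s}(e_0)$ and $s(q)=\tilde{t}(e_0)$ (with $p,q$ possibly empty). Any nonempty such $p$ would end in an arrow of $E$ with target $\tilde{s}(e_0)$, unless that arrow is $e_0$ itself; but $e_0$ appearing twice would give a cycle through $e_0$. So the only new path is $e_0$.

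This is the same situation as in the non-connected case of Lemma \ref{properties}. For any path $x\in\Path(\tilde{Q})$, both products $e_0\cdot x$ and $x\cdot e_0$ vanish, since $x$ would have to start at $\tilde{t}(e_0)$ or end at $\tilde{s}(e_0)$. Thus $e_0$ is central in $\fr{n}_{\tilde{Q}}$. Moreover, $e_0$ never appears as a factor of any product of other basis paths, so it is not in $[\fr{n}_{\tilde{Q}},\fr{n}_{\tilde{Q}}]$ either.

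Now I form the non-connected quiver $Q'$. Delete $e_0$ from $\tilde{Q}$ (keeping the vertices of $Q$ and any vertex used only by other arrows), and add a disjoint component $(\{w_1,w_2\},\{e_0\},w_1,w_2)$. The path sets correspond bijectively, with $e_0\mapsto e_0$ and all other paths unchanged. All products are preserved: products not involving $e_0$ are computed identically, and products involving $e_0$ are zero on both sides. Hence $\fr{n}_{\tilde{Q}}\simeq\fr{n}_{Q'}$, contradicting the hypothesis on $\tilde{Q}$.

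The main obstacle is whether this $Q'$ is genuinely a non-connected quiver. Deleting $e_0$ leaves $Q$ (connected, possibly plus one new isolated vertex of $\tilde{V}\setminus V$, which I drop since vertices not on any arrow contribute no paths), and then the disjoint edge is added. So $Q'$ has at least two components, and the argument goes through. The only care needed is the identification of $\fr{n}_Q$ as spanned by paths of positive length, which is the convention implicit in Lemma \ref{properties}.
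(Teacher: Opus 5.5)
Your proof is correct and follows the same route as the paper: assume $e_0$ is the only new path, note that $e_0$ then brackets trivially with every path, and split it off as a separate one-arrow component to get a non-connected $Q'$ with $\fr{n}_{\tilde{Q}}\simeq\fr{n}_{Q'}$, contradicting the hypothesis. You give more detail than the paper, namely the explicit construction of $Q'$ and the check that products are preserved; your appeal to ``no cycle through $e_0$'' is harmless, since such a cycle would itself produce infinitely many new paths.
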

	\begin{proof}
		Since $\#(E\cup\{e_0\})-\#E=1$ holds, it satisfies $\#(\Path(\tilde{Q})\setminus\Path(Q))\geq 1$. Assume that $\#(\Path(\tilde{Q})\setminus\Path(Q))=1$ holds, that is $\Path(\tilde{Q})=\Path(Q)\sqcup\{e_0\}$. Therefore, for any $x\in \Path(\tilde{Q})$, it satisfies $[e, x]=0$. Thus, $\fr{n}_{\tilde{Q}}$ is isomorphic to the Lie algebra obtained by a non-connected quiver. Therefore, since this is contradictory, $\#(\Path(\tilde{Q})\setminus\Path(Q))\geq2$ holds.
	\end{proof}
	\begin{Lem}\label{+3,4}
		Let $Q=(V, E, s, t)\in Quiver^3$ and $e_0\not\in E$. Define two quivers as follows: 
			\begin{itemize}
				\item $Q_1=(V_1, E\cup\{e_0\}, s_1, t_1)$, $s_1(e_0)=v_1$, $t_1(e_0)=v_2$, 
				\item $Q_2=(V_2, E\cup\{e_0\}, s_2, t_2)$, $s_2(e_0)=v_2$, $t_2(e_0)=v_3$, 
			\end{itemize}
			where $s_i|_E=s$ and $t_i|_E=t$ for $i=1, 2$. 
		Then, the following hold: 
		\begin{enumerate}
			\item $\#(\Path(Q_1)\setminus\Path(Q))\geq 3$, 
			\item $\#(\Path(Q_2)\setminus\Path(Q))\geq 4$.  
		\end{enumerate}
	\end{Lem}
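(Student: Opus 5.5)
The plan is to exhibit explicitly enough new paths in each enlarged quiver. These are paths that use the new arrow $e_0$, so they cannot lie in $\Path(Q)$.

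Since $Q\in Quiver^3$ contains $Q_0$ as a subquiver, write the arrows of $Q_0$ as $a\colon v_1\to v_2$, $b\colon v_2\to v_3$, $c\colon v_3\to v_4$. This is the convention already used in the proof of Lemma \ref{properties}, where $abc$ is the path of length 3. In both $Q_1$ and $Q_2$ the new arrow $e_0$ is parallel to an arrow of $Q_0$. So every path of $Q_0$ that passes through that arrow has a twin in which that arrow is replaced by $e_0$. Each such twin contains $e_0$, hence lies in $\Path(Q_i)\setminus\Path(Q)$. Distinct paths of $Q_0$ give distinct twins, because the replacement is injective on words.

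For (1), $e_0$ is parallel to $a$ (from $v_1$ to $v_2$). The paths of $Q_0$ through $a$ are $a$, $ab$ and $abc$. Their twins $e_0$, $e_0b$ and $e_0bc$ are three distinct paths of $Q_1$, and each has $t(e_0)=v_2=s(b)$, so they are composable. This gives $\#(\Path(Q_1)\setminus\Path(Q))\geq 3$.

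For (2), $e_0$ is parallel to $b$ (from $v_2$ to $v_3$). The paths of $Q_0$ through $b$ are $b$, $ab$, $bc$ and $abc$. Their twins $e_0$, $ae_0$, $e_0c$ and $ae_0c$ are four distinct paths of $Q_2$, which gives the bound $4$.

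The only point requiring care is that these words really are paths, i.e.\ the source/target compatibilities. These follow directly from $s_i(e_0)$ and $t_i(e_0)$ together with the restrictions $s_i|_E=s$ and $t_i|_E=t$. I would also remark that $Q_1$ and $Q_2$ may contain further new paths, for instance through other arrows of $Q$ entering $v_1$ or leaving $v_3$. Such extra paths only increase the count, so the inequalities are unaffected. No acyclicity issue arises for this counting, since we only list finitely many specific paths.
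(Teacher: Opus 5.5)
Your proof is correct and is essentially the paper's argument. The paper exhibits exactly the same new paths, $e_0, e_0b, e_0bc$ in $Q_1$ (using $s(b)=s(bc)=v_2$) and $e_0, ae_0, e_0c, ae_0c$ in $Q_2$, and your ``twin'' framing, which replaces the arrow of $Q_0$ parallel to $e_0$, is just a tidy way of organizing that list.
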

	\begin{proof}
		Since $s(b)=s(bc)=v_2$ holds, it satisfies $e_0, e_0b, e_0bc\in \Path(Q_1)\setminus\Path(Q)$. Therefore, $\#(\Path(Q_1)\setminus\Path(Q))\geq 3$ holds. Similarly, since $e_0, ae_0, e_0c, ae_0c\in \Path(Q_2)\setminus\Path(Q)$, it satisfies (2). 
	\end{proof}
	\begin{Lem}\label{rotating}
		Let $Q=(V, E, s, t)$ be a finite quiver without cycles, $Q^-=(V, E, s^-, t^-)$ be a quiver obtained by rotating $Q$, that is $s^-=t$ and $t^-=s$. Then, the obtained Lie algebra $\fr{n}_Q$ is isomorphic to $\fr{n}_{Q^-}$. 
	\end{Lem}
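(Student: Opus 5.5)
Reversing every arrow of $Q$ reverses every path, so the natural plan is to show that path reversal turns the path algebra of $Q$ into the opposite of the path algebra of $Q^-$. The Lie bracket of an algebra and of its opposite differ only by a sign, and multiplying by $-1$ is an isomorphism onto the opposite Lie algebra.

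\emph{Step 1: path reversal.} I will define a bijection $\Path(Q)\to\Path(Q^-)$ by $\alpha_1\alpha_2\cdots\alpha_m\mapsto\alpha_m\cdots\alpha_2\alpha_1$. This is well defined: the condition $t(\alpha_i)=s(\alpha_{i+1})$ becomes $s^-(\alpha_i)=t^-(\alpha_{i+1})$, which is exactly the concatenation condition for the reversed word in $Q^-$. The map is an involution, hence bijective. Extending linearly gives a linear isomorphism $\rho:\fr{n}_Q\to\fr{n}_{Q^-}$. The assumption that $Q$ is finite without cycles guarantees that both algebras are finite dimensional, although the argument itself does not need this.

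\emph{Step 2: anti-multiplicativity.} For paths $x,y$ of $Q$, the word $xy$ is a path in $Q$ if and only if $\rho(y)\rho(x)$ is a path in $Q^-$. In that case $\rho(x\cdot y)=\rho(y)\cdot\rho(x)$. Otherwise both sides are $0$. By bilinearity, $\rho(u\cdot v)=\rho(v)\cdot\rho(u)$ for all $u,v\in\fr{n}_Q$.

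\emph{Step 3: Lie isomorphism.} Set $\phi=-\rho$. Then
\[
\phi([u,v])=-\rho(u\cdot v-v\cdot u)=-(\rho(v)\cdot\rho(u)-\rho(u)\cdot\rho(v))=[\rho(u),\rho(v)].
\]
Since $[\phi(u),\phi(v)]=[\rho(u),\rho(v)]$, this gives $\phi([u,v])=[\phi(u),\phi(v)]$. Hence $\phi$ is a Lie algebra isomorphism $\fr{n}_Q\simeq\fr{n}_{Q^-}$.

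The only step needing care is Step 2: one must check that the concatenation condition transforms correctly under reversal, including the case where the product is zero. I expect this to be routine; everything else is formal.
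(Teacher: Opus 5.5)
Your proof is correct. It uses the same core idea as the paper, path reversal with a sign correction, but a different sign convention and a different verification.

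The paper sets $f(\alpha_1\cdots\alpha_r)=(-1)^{r+1}\alpha_r\cdots\alpha_1$, so odd-length paths get $+$ and even-length paths get $-$. It then checks bracket preservation directly on basis paths with $[x,y]=xy$. That check splits into parity cases and uses acyclicity to know that at most one of $xy$, $yx$ is a path.

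You take the uniform sign $\phi=-\rho$ and get the bracket identity in one line. The key fact is that reversal is an anti-isomorphism of the associative path algebras, and a negated anti-isomorphism preserves commutators.

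The two maps differ by $x\mapsto(-1)^{|x|}x$ on $\fr{n}_{Q^-}$. This is an automorphism because length is additive under concatenation, so both maps are isomorphisms.

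What each buys:
\begin{itemize}
\item Your route avoids the parity case analysis and handles the zero-product case automatically. It also does not need the no-cycles hypothesis, so it works for any quiver.
\item The paper's choice has the small advantage that $f$ fixes every arrow. Since arrows generate $\fr{n}_Q$ when $Q$ is acyclic, $f$ is the isomorphism determined by the identity on $E$.
\end{itemize}
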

	\begin{proof}
		By the definition of $Q^-$, it satisfies 
		\[
			\Path(Q^-)=\{\alpha_r\cdots \alpha_1\mid \alpha_1\cdots \alpha_r\in\Path(Q)\}. 
		\]
		Define a map $f: \Path(Q)\to\Path(Q^-)$ by
		\[
			f(\alpha_1\cdots\alpha_r)=
			\begin{cases}
				\alpha_r\cdots\alpha_1 & (r=\text{odd}),  \\
				-\alpha_r\cdots\alpha_1 & (r=\text{even}). 
			\end{cases}
		\]
		We prove that $f$ is an isomorphism. First, it is clear that $f$ is a bijection. Next, we show that $f$ preserves the Lie bracket. For paths $x=\alpha_1\cdots\alpha_i$ and $y=\alpha_{i+1}\cdots\alpha_r$, assume that $[x, y]=xy=\alpha_1\cdots\alpha_r$. In the case that $r$ is odd, only one of the lengths of $x$ or $y$ is odd. Therefore, it satisfies
		\begin{align*}
			[f(x), f(y)]&=-[\alpha_i\cdots\alpha_1, \alpha_r\cdots\alpha_{i+1}] \\
			&=\alpha_r\cdots\alpha_1	& (\because t^-(\alpha_{i+1})=s^-(\alpha_i)) \\
			&=f(xy) \\
			&=f([x, y])
		\end{align*}
		In the case that $r$ is even, both lengths of $x$ and $y$ are odd or even. Therefore, in both cases, $f$ preserves the Lie bracket. 
	\end{proof}
	\begin{Thm}
		Any quiver $Q\in Quiver^3$ such that $\mathrm{dim}~\fr{n}_Q\leq 10$ is isomorphic to one of the following. 
	\end{Thm}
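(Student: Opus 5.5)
We build up quivers in $Quiver^3$ one arrow at a time, starting from $Q_0$. By assumption every such $Q$ contains $Q_0$, the path $v_1\xrightarrow{a}v_2\xrightarrow{b}v_3\xrightarrow{c}v_4$. Since $\Path(Q_0)=\{a,b,c,ab,bc,abc\}$, we have $\mathrm{dim}~\fr{n}_{Q_0}=6$, with level dimensions $(3,2,1)$.

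A connected quiver has at least as many arrows as it has vertices minus one. Adding an arrow never destroys existing paths, and we may order the extra arrows so that every intermediate quiver stays connected and contains $Q_0$. Lemma \ref{+2} says each added arrow that does not split off an abelian summand adds at least $2$ to the dimension. Hence $Q$ has at most $\lfloor(10-6)/2\rfloor=2$ arrows beyond $a,b,c$. (If the strict hypothesis of Lemma \ref{+2} fails at an intermediate stage, we choose the order of additions so that each newly added arrow is composable with an existing one; connectedness of $Q$ allows this.) So the classification reduces to quivers with $4$ or $5$ arrows.

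\textbf{Quivers with one extra arrow $e$.} Lemma \ref{properties} rules out loops, backward arrows between the $v_i$, arrows into $v_1$ from outside $V_0$, arrows out of $v_4$ to outside $V_0$, and the arrow $v_1\to v_4$. The remaining possibilities are:
\begin{itemize}
\item a parallel arrow $v_1\to v_2$ (Lemma \ref{+3,4}: $+3$, giving $\dim 9$);
\item a parallel arrow $v_2\to v_3$ ($+4$, giving $\dim 10$);
\item a parallel arrow $v_3\to v_4$ (the rotation of $v_1\to v_2$ by Lemma \ref{rotating}, $\dim 9$);
\item a shortcut $v_1\to v_3$ (paths $e,ec$, $\dim 8$) or $v_2\to v_4$ (its rotation);
\item a new vertex $w$ with $w\to v_2$ (paths $e,eb,ebc$, $\dim 9$) or $w\to v_3$ (paths $e,ec$, $\dim 8$);
\item arrows $v_2\to w$ or $v_3\to w$, which are rotations of the previous case;
\item $v_1\to w$ or $w\to v_4$, which give only one new path and are excluded by Lemma \ref{+2}.
\end{itemize}
In each case we record the dimension and the level dimensions $(\dim\fr{g}^1,\dim\fr{g}^2,\dim\fr{g}^3)$, where level is path length.

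\textbf{Quivers with two extra arrows.} Both extra arrows must be among the additions above that cost exactly $2$, and the total increment, including new paths created by composing the two new arrows with each other, must be at most $4$. The cost-$2$ additions are the shortcuts $v_1\to v_3$, $v_2\to v_4$ and the arrows $w\to v_3$, $v_2\to w$. We enumerate the pairs: two shortcuts, shortcut plus pendant, two pendants at the same or different new vertices, and parallel copies. For each pair we discard those that create a new composite path; for example $v_1\to v_3$ together with $v_2\to v_4$ creates none, but $w\to v_3$ composed with something may create $ec$ and push the count over. We also discard those that create a path of length $4$ or split off an abelian summand. Every survivor has dimension exactly $10$.

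\textbf{Removing duplicates.} Finally we identify isomorphic Lie algebras, using Lemma \ref{rotating} for rotated pairs and, where needed, comparing invariants such as the dimensions of the center and of the derived and lower central series terms to show that the remaining algebras are pairwise non-isomorphic.

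The main obstacle is the two-extra-arrow case. We must count composite paths exhaustively, including paths passing through new vertices in both directions, and also justify the ordering argument that lets Lemma \ref{+2} be applied at each step. I expect to handle this by first choosing the extra arrow adjacent to $Q_0$ and then the second arrow adjacent to the enlarged quiver.
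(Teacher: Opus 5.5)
Your plan follows the paper's proof. You grow from $Q_0$, sort single-arrow additions using Lemmas~\ref{properties}, \ref{+2}, \ref{+3,4} and \ref{rotating}, and then treat two extra arrows. Your single-arrow list and the survivors you would end with are correct. However, two steps do not hold as you justify them.

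\textbf{The reduction to at most two extra arrows.} Connectedness of $Q$ only lets you add each arrow \emph{adjacent} to what is already there. Adjacency does not produce a second path: $v_1\to w$ is adjacent to $Q_0$ and contributes only itself. So neither ``connectedness allows this'' nor your fallback ``choose the arrow adjacent to $Q_0$'' works.

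The correct input is the defining property of $Quiver^3$. Call arrows $x,y$ composable if $t(x)=s(y)$ or $t(y)=s(x)$. Suppose $E=E_1\sqcup E_2$ with no composable pair across the two parts. Then every path lies in one part and all cross brackets vanish, so $\fr{n}_Q\cong\fr{n}_{Q_{E_1}}\oplus\fr{n}_{Q_{E_2}}$. That is the Lie algebra of the disjoint union of the two subquivers, contradicting $Q\in Quiver^3$. Hence the arrows are connected under composability.

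Now add the arrows in a search order from $\{a,b,c\}$ in this relation. Each new arrow brings itself and at least one composite with an earlier arrow. This gives $\dim\fr{n}_Q\ge 6+2k$ without needing the hypothesis of Lemma~\ref{+2} at intermediate stages; the paper leaves this point implicit as well. The same criterion should replace your ``splits off an abelian summand'', which is too weak: a component $v_1\to w\to w'$ splits off a Heisenberg summand.

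\textbf{The two-arrow enumeration.} The count only tells you that the first arrow $e$ costs $2$ over $Q_0$ and the second arrow $e'$ costs $2$ over $Q_0\cup\{e\}$. Since $e'$ may be composable with $e$ alone, it need not be one of your four cost-$2$ arrows. Your enumeration of pairs therefore omits exactly these configurations (after a shortcut no such $e'$ exists):
\begin{itemize}
\item $v_2\xrightarrow{e}w\xrightarrow{e'}x$ with $x\in\{v_4,w'\}$;
\item $x\xrightarrow{e'}w\xrightarrow{e}v_3$ with $x\in\{v_1,w'\}$.
\end{itemize}
Each creates two composites ($ee',aee'$, resp.\ $e'e,e'ec$), so each has dimension $11$. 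This is the paper's $Q_6$ in Table~\ref{added to Q_1}. Your final list survives, but the case has to be checked.

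\textbf{Distinguishing $\fr{n}_{Q_4}$ from $\fr{n}_{Q_5}$.} The statement does not require the listed algebras to be pairwise non-isomorphic. If you want that anyway, the invariants you name fail here: both algebras have level dimensions $5,4,1$, a $3$-dimensional center, and abelian $[\fr{n},\fr{n}]$. Instead, set $\fr{n}^2=[\fr{n},\fr{n}]$ and $\fr{n}^3=[\fr{n},\fr{n}^2]$ and look at the induced bracket $\fr{n}/\fr{n}^2\times\fr{n}/\fr{n}^2\to\fr{n}^2/\fr{n}^3$. Its graph is a fork for $Q_4$ and a path for $Q_5$. Concretely, the locus of $\bar X$ with $\mathrm{rank}\,\mathrm{ad}_{\bar X}\le 2$ is a hyperplane for $Q_4$ but at most $3$-dimensional for $Q_5$.
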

	\begin{figure}[H]
		\begin{center}
			\begin{minipage}{.49\columnwidth}
				\centering
				\includegraphics[width=0.9\columnwidth]{Q_0.pdf}
				\caption{Quiver $Q_0$}
				\label{quiverQ_0}
			\end{minipage}
			\begin{minipage}{.49\columnwidth}
				\centering
				\includegraphics[width=0.9\columnwidth]{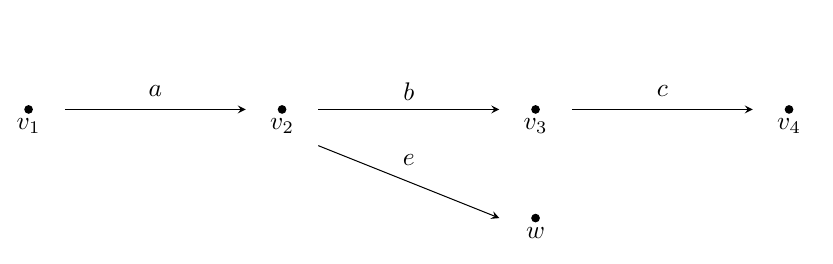}
				\caption{Quiver $Q_1$}
				\label{quiverQ_1}
			\end{minipage}
		\end{center}
	\end{figure}
	\begin{figure}[H]
		\begin{center}
			\begin{minipage}{.49\columnwidth}
				\centering
				\includegraphics[width=0.9\columnwidth]{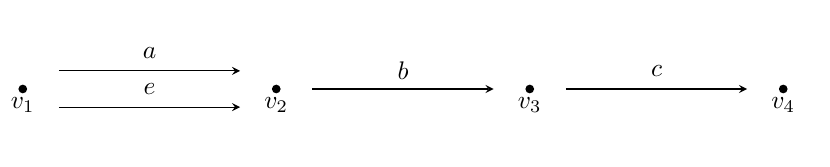}
				\caption{Quiver $Q_2$}
				\label{quiverQ_2}
			\end{minipage}
			\begin{minipage}{.49\columnwidth}
				\centering
				\includegraphics[width=0.9\columnwidth]{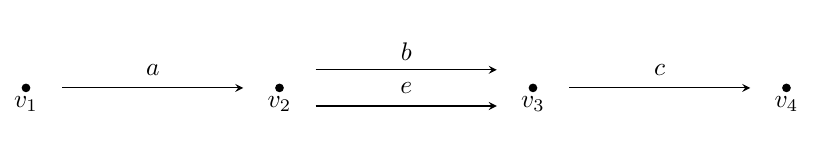}
				\caption{Quiver $Q_3$}
				\label{quiverQ_3}
			\end{minipage}
		\end{center}
	\end{figure}
	\begin{figure}[H]
		\begin{center}
			\begin{minipage}{.49\columnwidth}
				\centering
				\includegraphics[width=0.9\columnwidth]{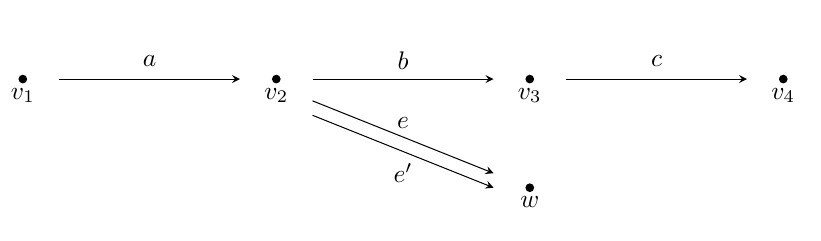}
				\caption{Quiver $Q_4$}
				\label{quiverQ_4}
			\end{minipage}
			\begin{minipage}{.49\columnwidth}
				\centering
				\includegraphics[width=0.9\columnwidth]{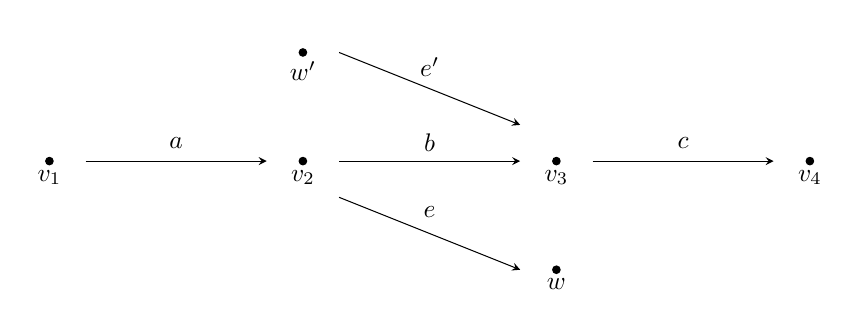}
				\caption{Quiver $Q_5$}
				\label{quiverQ_5}
			\end{minipage}
		\end{center}
	\end{figure}
	\begin{table}[H]
		\begin{center}
			\begin{tabular}{|c|c|c|c|c|}
				\hline
				$\mathrm{dim}~\fr{n}_Q$ & $Q$ & $\#\{\text{paths of length 3}\}$ & $\#\{\text{paths of length 2}\}$ & $\#\{\text{paths of length 1}\}$ \\ \hline
				6 & $Q_0$ & 1 & 2 & 3 \\ \hline
				8 & $Q_1$ & 1 & 3 & 4 \\ \hline
				9 & $Q_2$ & 2 & 3 & 4 \\ \hline
				\multirow{3}{*}{10} & $Q_3$ & 2 & 4 & 4 \\
				\cline{2-5}
				 & $Q_4$ & 1 & 4 & 5 \\
				\cline{2-5}
				 & $Q_5$ & 1 & 4 & 5 \\ \hline
			\end{tabular}
		\end{center}
		\caption{Three-step nilpotent Lie algebras obtained by quivers which are $\leq\mathrm{dim} 10$}
		\label{quiver table}
	\end{table}
	\begin{proof}
		Take any $Q=(V, E, s, t)\in Quiver^3$ and then it satisfies $\# E\geq 3$. In the case of $\# E=3$, the only element of $Quiver^3$ is the quiver $Q_0$. In the case of $\# E=4$, a quiver $Q$ has $Q_0$ as a subquiver, that is $E=\{a, b, c, e\}$. Since $Q$ is connected, $s(e)$ or $t(e)$ is $v_1$, $v_2$, $v_3$ or $v_4$. If we consider all cases regarding the source and target of $e$, the result is as shown in Table \ref{e}, where $w$ in a vertex in $V\setminus V_0$. From Lemma\ref{properties}, it does not satisfy $Q\in Quiver^3$ in the following cases. 
		\begin{itemize}
			\item $s(e)=t(e)=v_i$ for $i=1, 2, 3, 4$, 
			\item $s(e)=t(e)=w$, 
			\item $s(e)=v_i$, $t(e)=v_j$ and $j<i$, 
			\item $t(e)=v_1$ and $s(e)=w$, 
			\item $s(e)=v_4$ and $t(e)=w$, 
			\item $s(e)=v_1$ and $t(e)=v_4$. 
		\end{itemize}
		Additionally, in both case of $s(e)=v_1$ and $t(e)=w$ and case of $s(e)=w$ and $t(e)=v_4$, there exists a non-connected quiver $Q'$ such that $\fr{n}_Q\simeq\fr{n}_{Q'}$. Next, we show that in several cases the obtained Lie algebras are isomorphic. Let $Q_1$ be a quiver where $s(e)=v_2$ and $t(e)=w$, $Q_2$ be a quiver where $s(e)=v_1$ and $t(e)=v_2$, and $Q_3$ be a quiver where $s(e)=v_2$ and $t(e)=v_3$. First, consider quivers isomorphic to $Q_1$. In the case of $s(e)=v_2$ and $t(e)=v_4$, all paths in this quiver are same to $\Path(Q_1)$. Also, in the case of $s(e)=w$ and $t(e)=v_3$, this quiver is isomorphic to $Q_1^-$ where $Q_1^-$ is a quiver obtained by rotating $Q_1$. From Lemma\ref{rotating}, the obtained Lie algebra by this quiver is isomorphic to $\fr{n}_{Q_1^-}$. Moreover, in both case of $s(e)=v_1$ and $t(e)=v_3$ and case of $s(e)=w$ and $t(e)=v_3$,  the set of all paths in each quiver is the same. Base on the above, the Lie algebras obtained in the following four cases are isomorphic. 
		\begin{itemize}
			\item $s(e)=v_2$ and $t(e)=w$\quad($Q_1$), 
			\item $s(e)=v_2$ and $t(e)=v_4$, 
			\item $s(e)=w$ and $t(e)=v_3$, 
			\item $s(e)=v_1$ and $t(e)=v_3$. 
		\end{itemize}
		Similarly, the Lie algebras obtained in the following four cases are isomorphic as well. 
		\begin{itemize}
			\item $s(e)=v_1$ and $t(e)=v_2$\quad($Q_2$), 
			\item $s(e)=w$ and $t(e)=v_2$, 
			\item $s(e)=v_3$ and $t(e)=v_4$, 
			\item $s(e)=v_3$ and $t(e)=w$. 
		\end{itemize}
		Finally, since each dimension is different, the Lie algebras obtained by $Q_1$, $Q_2$ and $Q_3$ are not isomorphic. Therefore, $Quiver^3$ with $\# E=4$ are $Q_1$, $Q_2$ and $Q_3$. 
		\begin{table}[H]
			\begin{center}
				\begin{tabular}{|c|c|c|c|c|c|}
					\hline
					\diagbox{$s(e)$}{$t(e)$} & $v_1$ & $v_2$ & $v_3$ & $v_4$ & $w$ \\ \hline
					$v_1$ & Lem\ref{properties}(1) & $Q_2$ & $Q_1^-$ & Lem\ref{properties}(4) & Lem\ref{properties}(4) \\ \hline
					$v_2$ & Lem\ref{properties}(2) & Lem\ref{properties}(1) & $Q_3$ & $Q_1$ & $Q_1$ \\ \hline
					$v_3$ & Lem\ref{properties}(2) & Lem\ref{properties}(2) & Lem\ref{properties}(1) & $Q_2^-$ & $Q_2^-$ \\ \hline
					$v_4$ & Lem\ref{properties}(2) & Lem\ref{properties}(2) & Lem\ref{properties}(2) &Lem\ref{properties}(1) & Lem\ref{properties}(3) \\ \hline
					$w$ & Lem\ref{properties}(3) & $Q_2$ & $Q_1^-$ & Lem\ref{properties}(4) & Lem\ref{properties}(1) \\ \hline
  				\end{tabular}
			\end{center}
			\caption{The source and target $e$}
			\label{e}
		\end{table}
		Since the dimensions of $\fr{n}_{Q_2}$ and $\fr{n}_{Q_3}$ are 9 and 10, respectively, these quivers cannot have more arrows added to them by Lemma\ref{+2}. Therefore, in the case of $\# E=5$, we only have to consider $Q_1$ with one additional arrow $e'$. For $Q_1$, we similarly consider all cases, as shown in Table\ref{added to Q_1}. Let $Q_4$ be a quiver where $s(e')=v_2$ and $t(e')=w$, $Q_5$ be a quiver where $s(e')=w'$ and $t(e')=v_3$, and $Q_6$ be a quiver where $s(e')=w$ and $t(e')=w'$. Then, the dimension of $\fr{n}_{Q_6}$ is 11. If we examine other quivers isomorphic to $Q_1$ in the same way, we find that only $Q_4$ and $Q_5$ are of dimension $\leq 10$. Therefore, $Quiver^3$ with $\# E=5$ are $Q_4$ and $Q_5$. 
		\begin{table}[H]
			\begin{center}
				\begin{tabular}{|c|c|c|c|c|c|c|}
					\hline
					\diagbox{$s(e')$}{$t(e')$} & $v_1$ & $v_2$ & $v_3$ & $v_4$ & $w$ & $w'$ \\ \hline
					$v_1$ & Lam\ref{properties}(1) & Lem\ref{+3,4} & $Q_5$ &Lam\ref{properties}(4) & Lam\ref{properties}(4) & Lam\ref{properties}(4) \\ \hline
					$v_2$ & Lam\ref{properties}(2) & Lam\ref{properties}(1) & Lem\ref{+3,4} & $Q_4$ & $Q_4$ & $Q_4$ \\ \hline
					$v_3$ & Lam\ref{properties}(2) & Lam\ref{properties}(2) & Lam\ref{properties}(1) & Lem\ref{+3,4} & Lem\ref{+3,4} & Lem\ref{+3,4} \\  \hline
					$v_4$ & Lam\ref{properties}(2) & Lam\ref{properties}(2) & Lam\ref{properties}(2) & Lam\ref{properties}(1) &Lam\ref{properties}(3) & Lam\ref{properties}(3) \\ \hline 
					$w$ & Lam\ref{properties}(2) & Lam\ref{properties}(2) & Lam\ref{properties}(3) & $Q_6$ & Lam\ref{properties}(1) & $Q_6$ \\ \hline
					$w'$ & Lam\ref{properties}(3) & Lam\ref{+3,4} & $Q_5$ & Lam\ref{properties}(4) & Lam\ref{properties}(4) & Lam\ref{properties}(1) \\ \hline   
				\end{tabular}
			\end{center}
			\caption{All cases where one arrow is added to $Q_1$}
			\label{added to Q_1}
		\end{table}
	\end{proof}
\section*{Acknowledgements} 
	The author is deeply grateful to Professors Ali Baklouti and Hideyuki Ishi, the organizers of the 8th Tunisian--Japanese Conference, for providing a valuable opportunity for stimulating discussions. This work originated from fruitful exchanges that took place during the conference. 
	
	This work was partly supported by MEXT Promotion of Distinctive Joint Research Center Program JPMXP0723833165 and Osaka Metropolitan University Strategic Research Promotion Project (Development of International Research Hubs).


\begin{thebibliography}{99} 
	\bibitem [K] {K}{Knapp,~Anthony~W.}, {``Lie groups beyond an introduction''}, {Progr. Math.~\textbf{140}. Birkh\"auser Boston, Inc., Boston, MA (2002)}. 
	\bibitem [MT]{MT}{Mizoguchi,~F. and Tamaru,~H.}, {``Nilpotent Lie algebras obtained by quivers and Ricci solitons''}, {Adv. Math.~\textbf{480}~(2025), 110464 [22 pp]}. 
	\bibitem [T1] {T1}{Tamaru,~H.}, {``The local orbit types of symmetric spaces under the actions of the isotropy subgroups''}, {Differential Geom. Appl. \textbf{11} (1999), no.~1, 29--38}. 
	\bibitem [T2] {T2}{Tamaru,~H.}, {``Parabolic subgroups of semisimple Lie groups and Einstein solvmanifolds''}, {Math. Ann.~\textbf{351} (2011)}, {no.~1, 51--66}. 
\end{thebibliography}
\end{document}